\documentclass{amsart}

\usepackage[utf8]{inputenc}
\usepackage[T1]{fontenc}
\usepackage[english]{babel}
\usepackage{color}
\usepackage{amsmath} 
\usepackage{amssymb} 
\usepackage{amsthm}
\usepackage{graphicx}
\usepackage[colorlinks=true, linkcolor=blue]{hyperref}
\usepackage{cancel}

\usepackage{lmodern}
\usepackage{microtype}

\theoremstyle{plain}
\newtheorem{thm}{Theorem}[section]
\newtheorem*{thm*}{Theorem}

\newtheorem{lem}[thm]{Lemma}

\newtheorem*{cor*}{Corollary}

\newtheorem{rem}{Remark}

\newcommand {\R} {\mathbb{R}} \newcommand {\Z} {\mathbb{Z}}
\newcommand {\T} {\mathbb{T}} \newcommand {\N} {\mathbb{N}}

\newcommand {\p} {\partial}
\newcommand {\dt} {\partial_t}
\newcommand {\hl}{\sigma}

\DeclareMathOperator{\hyp}{_2F^1}

\begin{document}
\title[On the Boussinesq equations]{On the Boussinesq equations with non-monotone
  temperature profiles}

\begin{abstract}
  In this article we consider the asymptotic stability of the two-dimensional Boussinesq
  equations with partial dissipation near a combination of Couette flow and
  temperature profiles $T(y)$.
  As a first main result we show that if $T'$ is of size at most $\nu^{1/3}$ in
  a suitable norm, then the linearized Boussinesq equations with only vertical
  dissipation of the velocity but not of the temperature are stable.
  Thus, mixing enhanced dissipation can suppress Rayleigh-B\'enard instability in
  this linearized case.

  We further show that these results extend to the (forced) nonlinear equations
  with vertical dissipation in both temperature and velocity.
\end{abstract}

\author{Christian Zillinger}
\address{Karlsruhe Institute of Technology, Department of Mathematics, Englerstrasse.
  2, 76131 Karlsruhe, Germany}
\email{zillinger@kit.edu}
\keywords{Boussinesq equations, partial dissipation, hydrostatic imbalance,
  enhanced dissipation, shear flow}
\subjclass[2010]{35Q79,35Q35,76D05,35B40}
\maketitle

\tableofcontents

\section{Introduction}
\label{sec:introduction}  
The Boussinesq equations are a standard approximate model of heat transfer in (viscous)
fluids and are given by a coupled system of the Navier-Stokes equations and
a dissipative transport equation for the temperature density:
\begin{align}
  \label{eq:B}
  \begin{split}
  \dt v+ v \cdot \nabla v + \nabla p &= (\nu_x \p_x^2 + \nu_y \p_y^2 )v + \theta e_2, \\
  \dt \theta + v \cdot \nabla \theta &= (\mu_x \p_x^2 + \mu_y \p_y^2) \theta,\\
  \nabla \cdot v &=0.
  \end{split}
\end{align}
Here $v \in \R^2$ denotes the velocity, $p \in \R$ is the pressure, $\theta \in
\R$ is the temperature and we consider the domain $\T \times \R \ni (x,y)$.
The $\theta e_2$ term models buoyancy which causes hotter fluid to rise and colder fluid
to sink.

In Sections \ref{sec:model} and \ref{sec:hydroimbalance} of this article we
consider the setting with only vertical dissipation of the velocity,
\begin{align*}
 \nu_x=\mu_x=\mu_y=0, \nu_y=:\nu>0. 
\end{align*}
We refer to this setting as vertical dissipation.
In Section \ref{sec:nonlinear} we assume vertical dissipation in both velocity and
temperature, which we refer to as full vertical dissipation.

One readily observes that at least formally any pair of functions of the form
\begin{align}
  v= (\beta y, 0), \theta=T(y),
\end{align}
with $\beta \in \R$ and $T$ smooth are automatically stationary solutions of the
vertical dissipation problem (choosing $p=p(y)$ suitably).
Here a particular focus in existing results has been on the case when $T$ is
affine and increasing, that is hotter fluid is on top of colder fluid, which is
known as hydrostatic balance.
A main aim of this article is to study more general profiles $T(y)$ and in
particular answer how much $T$ may oscillate if both shear and viscosity are
available to counteract thermal instability.

More generally, the problem of partial dissipation has been an area of extensive research, where we
in particular mention the recent works
\cite{elgindi2015sharp,widmayer2018convergence,doering2018long,yang2018linear,wu2019stability,deng2020stability,wu2020stabilizing} and \cite{dong2020stability2}. The question of global
wellposedness has been addressed in series of works by Chae, Nam and Kim \cite{chae1999local,chae2006global}.

In this article, we we will focus on questions of asymptotic stability close to
specific families of solutions and how the interaction of mixing and temperature
stratification may counteract instability.

In \cite{yang2018linear} Yang and Lin studied the stability of the linearized \emph{inviscid}
problem around the case where $T(y)=\alpha y$ is affine and showed that for
some stability results it is necessary that $\alpha>0$ and thus $T$ is
increasing.
We recall these results in Section \ref{sec:model} and emphasize that the
threshold with respect to $\alpha$ depends on whether one studies
\begin{itemize}
\item the vorticity $\omega$, which is always unstable, 
\item the horizontal component of the velocity $v_1$, which is stable if
  $\alpha>0$ and unstable if $\alpha<0$, or 
\item the vertical component of the velocity $v_2$, which is stable if $\alpha>
  -2$ and unstable if $\alpha<-2$.
\end{itemize}
Thus, already in this case in a specific sense one may allow $\alpha$ to be
\emph{negative} if it is sufficiently small.

Recently, Masmoudi, Said-Houari and Zhao \cite{masmoudi2020stability} showed that the associated \emph{nonlinear} problem near
$T(y)=\alpha y$, $\alpha>0$ without
thermal diffusion but with viscous diffusion is asymptotically stable in Gevrey
regularity. These results in particular show that this partial dissipation
problem behaves similarly to the Euler equations \cite{bedrossian2015inviscid} instead
of the Navier-Stokes equations \cite{bedrossian2016sobolev}.
If one instead considers full dissipation, in \cite{zillinger2020enhanced} we
adapted the methods of \cite{bedrossian2016sobolev,liss2020sobolev}
to establish nonlinear stability in Sobolev regularity.

This article extends the results of \cite{zillinger2020enhanced} to the case of \emph{negative} $\alpha$
and partial dissipation. More precisely, we show for the linearized problem with vertical dissipation that
the evolution is asymptotically stable provided 
\begin{align}
  \alpha> -\frac{1}{100}\sqrt[3]{\nu}.
\end{align}
Similar results hold for $T(y)$ non-affine.
Thus, mixing enhanced dissipation can suppress Rayleigh-B\'enard instability with
an enhanced dependence on $\nu$. We remark that beneficial interaction of shear
and (in)stability in the context of reaction-diffusion and turbulence has
previously been observed in \cite{spiegel1984reaction,doering1993stability,castaing1989scaling}.

For the nonlinear problem we further show that for affine $T$ and full vertical
dissipation the same stability results hold.
As shown in \cite{masmoudi2020stability} in the case of vertical dissipation
only in the vorticity a more careful
analysis is required to control resonances, reminiscent of echoes in the Euler
equations \cite{dengmasmoudi2018,dengZ2019}.

Our main results concerning the linearized problem are summarized in the following theorem.
\begin{thm}
  \label{thm:main} Let $T: \R \rightarrow \R$ be a given temperature profile.
  Let $N \in \N$ and suppose that $T'(y) \in L^\infty$.
  We then consider linearized Boussinesq equations with vertical dissipation in
  the velocity only around $v=(y,0)$, $\theta=T(y)$ in
  coordinates $(x+ty, y)$:
  \begin{align*}
    \dt \omega &= \nu (\p_y-t\p_x)^2 \omega + \p_x \theta , \\
    \dt \p_x \theta &= T'(y)\p_x v_2, \\
    (t,x,y) &\in (0,\infty)\times \T \times \R 
  \end{align*}
  Then if the Fourier transform of $T'$ satisfies the estimate
  \begin{align}
    \label{eq:27}
    \sup_{\xi} \int |\mathcal{F}(T')(z-\xi)|  (\frac{1+|z|}{1+|\xi|} + \frac{1+|\xi|}{1+|z|})^{N} (1+ \min(\nu^{-\frac{2}{3}}, |z-\xi|^{\frac{2}{3}}))dz < \frac{1}{100} \nu^{1/3},
  \end{align}
  the initial value problem is stable in $H^N \times H^N$ in the sense that
  there exists a constant $C>0$ such that for any initial data $(\omega_{in}, \p_x
  \theta_{in}) \in H^N \times H^N$ the solution satisfies 
  \begin{align*}
    \|\omega(t)\|_{H^N} + \nu \|\p_x \theta\|_{H^N} \leq C \nu^{-\frac{2}{3}} (\|\omega_{in}\|_{H^N} + \nu \|\p_x \theta_{in}\|_{H^N}).
  \end{align*}
  In particular, if $T'(y)=\alpha y$, stability holds if $\alpha> -\frac{1}{100}
  \nu^{1/3}$.
  \end{thm}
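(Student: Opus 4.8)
The plan is to pass to the Fourier side in both $x$ and $y$, turn the system into a family (indexed by the horizontal wave number $k\in\Z$) of non-autonomous problems, and run a bootstrap argument built on the explicit Duhamel representation of the dissipative vorticity equation together with a direct time-integration of the temperature equation.

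\emph{Reduction.} Writing $\hat\omega(k,\eta,t)$ and $\hat\Theta(k,\eta,t)$ for the Fourier transforms of $\omega$ and $\p_x\theta$, the mode $k=0$ is trivial ($\hat\omega$ solves a heat equation and $\dt\hat\Theta=0$ since $\p_x v_2=0$ there), so only $k\neq 0$ matters. For such $k$ the operator $\Delta_t$ has symbol $-(k^2+(\eta-tk)^2)$, the operator $\omega\mapsto\p_x v_2$ has symbol $\tfrac{k^2}{k^2+(\eta-tk)^2}\in(0,1]$, and multiplication by $T'(y)$ becomes convolution in $\eta$ against $\mathcal F(T')$; for affine $T$ this convolution collapses to multiplication by a constant and the $k$-modes decouple entirely. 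The vorticity equation becomes $\dt\hat\omega=-\nu(\eta-tk)^2\hat\omega+\hat\Theta$, to be solved with its integrating factor, and $\dt\hat\Theta=-\int\mathcal F(T')(\eta-\zeta)\,\tfrac{k^2}{k^2+(\zeta-tk)^2}\,\hat\omega(k,\zeta,\cdot)\,d\zeta$, to be integrated in time.

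\emph{Key estimates and bootstrap.} The two quantitative inputs are: (i) the enhanced-dissipation kernel bound $\int_{s}^{t}\exp\big(-\nu\!\int_{s'}^{t}(\eta-rk)^2\,dr\big)\,ds' \lesssim \nu^{-1/3}|k|^{-2/3}$, proved from the cubic lower bound $\nu\int_{s'}^{t}(\eta-rk)^2\,dr\gtrsim \nu k^2 (t-s')^3$, together with its refinement carrying extra decay in $|\eta-tk|$ (the latter being the origin of the $1+\min(\nu^{-2/3},|z-\xi|^{2/3})$ weight in \eqref{eq:27}, since after the $T'$-convolution the Biot--Savart symbol in the feedback term is evaluated at a critical time shifted by the frequency difference $z-\xi$); and (ii) the elementary bound $\int_0^{\infty}\tfrac{k^2}{k^2+(\eta-sk)^2}\,ds\le\pi$. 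I would then bootstrap the time-supremum of suitably weighted Fourier norms: plugging $\sup_t|\hat\Theta|$ into the Duhamel formula and using (i) yields $\sup_t|\hat\omega|\lesssim|\hat\omega_{in}|+\nu^{-1/3}|k|^{-2/3}\sup_t|\hat\Theta|$, while plugging $\sup_t|\hat\omega|$ into the integrated temperature equation and using (ii), together with Young's inequality in $\eta$, bounds $\sup_t|\hat\Theta|$ by $|\hat\Theta_{in}|+\pi\,\big(|\mathcal F(T')|\ast\sup_t|\hat\omega|\big)$. Composing the two, the self-improvement factor is $\pi\nu^{-1/3}$ times the left-hand side of \eqref{eq:27}, which by hypothesis is $<\tfrac{\pi}{100}\cdot(\text{absolute constant})<1$, so the loop closes. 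Taking $H^N$ norms — squaring, integrating in $\eta$, summing in $k$ — and using that the weights $\big(\tfrac{1+|z|}{1+|\xi|}+\tfrac{1+|\xi|}{1+|z|}\big)^N$ in \eqref{eq:27} are precisely what transfers $H^N$ regularity across the convolution in either direction (via $1+|\eta|\le(1+|\eta-\zeta|)(1+|\zeta|)$), and keeping careful track of the powers of $\nu$, produces the stated bound with the $\nu^{-2/3}$ loss; for affine $T$ the convolution is trivial and \eqref{eq:27} reduces to $|\alpha|\lesssim\nu^{1/3}$, giving the last assertion.

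\emph{Main obstacle.} The hard part is the non-affine case: controlling the $T'$-convolution simultaneously with the enhanced-dissipation gain. The modes $\hat\omega(k,\zeta,\cdot)$ feeding $\hat\Theta(k,\eta,\cdot)$ sit at the shifted critical time $\zeta/k$, so one must estimate, uniformly in the shift, the interaction of the dissipation kernel with the Biot--Savart symbol at a displaced frequency; it is exactly this that dictates the weighted norm in \eqref{eq:27} with its $1+\min(\nu^{-2/3},|z-\xi|^{2/3})$ factor and that produces the $\nu^{-2/3}$ in the conclusion. A secondary difficulty is the bookkeeping needed to check that the time-supremum and frequency-$L^2$ norms combine consistently and that the bootstrap constant stays genuinely below $1$.
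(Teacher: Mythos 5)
Your argument is essentially sound, but it takes a genuinely different route from the paper. The paper never writes a Duhamel formula: it constructs a monotone ``ghost'' energy $E(t)=\alpha\|AB\omega\|_{H^N}^2+\|AB\hl\theta\|_{H^N}^2$ with the decaying Fourier multipliers $A$ (built on the time-integrability of the Biot--Savart symbol $\frac{1}{1+(\xi/k-t)^2}$, your estimate (ii)) and $B$ (built on the resonant interval $|\xi/k-t|\le\nu^{-1/3}$, playing the role of your enhanced-dissipation gain (i)), splits frequencies into resonant and non-resonant regions, and controls the $T'$-commutators by Schur's test --- the $(1+\min(\nu^{-2/3},|z-\xi|^{2/3}))$ factor in \eqref{eq:27} arises there from the ratio $B(t,k,\xi)B^{-1}(t,k,\zeta)$ and the in/out interaction terms, not from a shifted critical time in a Duhamel kernel as you suggest. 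Your scheme instead treats the coupling purely perturbatively: the kernel bound $\int_0^t\exp(-\nu\int_s^t(\eta-rk)^2dr)\,ds\lesssim(\nu k^2)^{-1/3}$ and the bound $\int_0^\infty\frac{k^2}{k^2+(\eta-sk)^2}ds\le\pi$ are both correct, the composition gives a contraction factor $\lesssim\nu^{-1/3}\cdot\frac{1}{100}\nu^{1/3}<1$ under \eqref{eq:27} (in fact you only use the weight $(1+|z-\xi|)^N$, so you need less than the stated hypothesis), and the absorption is legitimate after restricting to finite time intervals where the weighted sup-in-time norms are a priori finite. What each approach buys: yours is shorter and avoids the commutator machinery entirely, but it relies essentially on the smallness $|T'|\lesssim\nu^{1/3}$ and would not exploit the sign cancellation that lets the paper handle arbitrarily large \emph{positive} slopes (Theorem \ref{thm:good}), nor does it produce the multipliers $A,B$ that Section \ref{sec:nonlinear} reuses for the nonlinear bootstrap. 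One quantitative caveat: tracked honestly, your loop yields $\sup_t\|\omega\|_{H^N}\lesssim\|\omega_{in}\|_{H^N}+\nu^{-1/3}\|\p_x\theta_{in}\|_{H^N}$, i.e.\ a prefactor $\nu^{-4/3}$ for the weighted combination in the theorem rather than $\nu^{-2/3}$; you cannot do better, since already for $T'=0$ the transient growth at the critical time $t=\eta/k$ forces $|\tilde\omega|\sim\nu^{-1/3}|\widehat{\p_x\theta}_{in}|$ --- so this reflects a looseness in the stated constant rather than a gap in your method.
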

The condition \eqref{eq:27} is a sufficient condition to control commutators
involving $T'(y)$ and is probably not optimal in its dependence on $N$. In
the case where $T(y)=\alpha y$ is affine, it reduces to the condition $|\alpha|<
C_N \nu^{1/3}$ and thus allows for $\alpha$ to be negative. See Theorem
\ref{thm:bad} for further discussion. 
 
For the nonlinear problem with full vertical dissipation we obtain similar results.  
\begin{thm}
Let $T: \R \rightarrow \R$ be a given temperature profile and consider the (forced) nonlinear problem around $v=(y,0)$,
$\theta=T(y)$ \emph{with vertical dissipation} $\nu_y=\mu_y=:\nu>0$ in coordinates $(x+ty,y)$:
\begin{align*}
  \dt \omega + v \cdot \nabla_t \omega &= \nu (\p_y-t\p_x)^2 \omega + \p_x \theta , \\
  \dt \theta + v \cdot \nabla_t \theta &= \nu (\p_y-t\p_x)^2 \theta + T'(y) v_2 \omega,\\
  (t,x,y) &\in (0,\infty)\times \T \times \R,
\end{align*}
and suppose that $T'$ satisfies the assumptions of Theorem \ref{thm:main}. Then
this problem is stable in Sobolev regularity.
More precisely, for any $N \in \N$, $N\geq 5$ there exists $\epsilon_N=\epsilon_N(\nu)$
such that if initially
\begin{align*}
  \|\omega\|_{H^N}^2 + \nu^{-1} \|\p_x \theta\|_{H^N}^2 < \epsilon^2 < \epsilon_{N}^2,
\end{align*}
then the solution remains bounded by $10 \nu^{-2/3} \epsilon^2$ for all times.
\end{thm}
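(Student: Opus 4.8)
The plan is to run a continuity (bootstrap) argument whose engine is the enhanced dissipation of the shear--diffusion operator $\nu(\p_y-t\p_x)^2$ acting on \emph{both} $\omega$ and $\theta$ (rate $\nu^{1/3}$, with a transient amplification of order $\nu^{-2/3}$ produced by the $\p_x\theta$-forcing of $\omega$, as in the linear analysis behind Theorem \ref{thm:main}), while the transport nonlinearities, the temperature nonlinearity $T'(y)v_2\omega$ and the forcing are treated perturbatively. Introduce the energy and dissipation functionals
\begin{align*}
  \mathcal E(t)&:=\|\omega(t)\|_{H^N}^2+\nu^{-1}\|\p_x\theta(t)\|_{H^N}^2,\\
  \mathcal D(t)&:=\nu\|(\p_y-t\p_x)\omega(t)\|_{H^N}^2+\|(\p_y-t\p_x)\p_x\theta(t)\|_{H^N}^2,
\end{align*}
or rather a modification of $\mathcal E$ incorporating the hypocoercive cross term between $\omega$ and $\p_x\theta$ (and possibly a mild Fourier multiplier adapted to the shear) that makes the $\nu^{1/3}$ rate visible. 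Assume that on a maximal interval $[0,T^\ast)$ the bootstrap bound $\mathcal E(t)\le 10\,\nu^{-2/3}\epsilon^2$ holds; it suffices to improve it to, say, $\mathcal E(t)\le 5\,\nu^{-2/3}\epsilon^2$ once $\epsilon$ is below a suitable positive power of $\nu$, since this forces $T^\ast=\infty$. Throughout, $v$ is recovered from $\omega$ by the Biot--Savart law in the moving frame, $v=\nabla_t^{\perp}\Delta_t^{-1}\omega$ with $\nabla_t=(\p_x,\p_y-t\p_x)$, so that in particular $v_2=\p_x\Delta_t^{-1}\omega$ enjoys inviscid damping, $\|v_2(t)\|_{H^{N-2}}\lesssim\langle t\rangle^{-2}\|\omega(t)\|_{H^N}$.

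The heart of the matter is the differential inequality $\tfrac{d}{dt}\mathcal E+c\,\mathcal D\le(\text{nonlinear contributions})$ obtained by testing the $\omega$--equation against $\langle\nabla_t\rangle^{2N}\omega$ and the $\p_x$--differentiated $\theta$--equation against $\nu^{-1}\langle\nabla_t\rangle^{2N}\p_x\theta$. For the transport terms one uses incompressibility: since $\nabla_t\cdot v=0$ one writes $v\cdot\nabla_t\omega=\nabla_t\cdot(v\,\omega)$, so the top-order contribution in which all derivatives fall on $\omega$ cancels, leaving commutator terms controlled by the algebra property of $H^N$ ($N\ge 5$ leaves room for Biot--Savart in the moving frame, the inviscid-damping estimates, and these products). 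The essential difficulty is that the symbol of $\nabla_t$ grows in $t$, so several of these commutator terms carry positive powers of $t$ and cannot be absorbed by $\mathcal E$ alone; instead they are reabsorbed by the dissipation $\mathcal D$ — which for generic frequencies behaves like $\nu\langle t\rangle^2\|\p_x\omega\|_{H^N}^2$ — at the cost of a factor of order $\nu^{-1/2}\mathcal E^{1/2}\lesssim\nu^{-1/2}\cdot\nu^{-1/3}\epsilon$, so that for $\epsilon$ small compared to $\nu^{5/6}$ (up to the room spent elsewhere) they are swallowed by $c\,\mathcal D$.

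The temperature nonlinearity $T'(y)v_2\omega$ is handled by reducing multiplication by the (possibly linearly growing) profile $T'$ to the convolution estimate in hypothesis \eqref{eq:27}, which yields $\|T'(y)f\|_{H^N}\lesssim\nu^{1/3}\|f\|_{H^N}$ up to the weights built into \eqref{eq:27}; together with the inviscid damping of $v_2$ and the bootstrap bound $\|\omega(t)\|_{H^N}\lesssim\nu^{-1/3}\epsilon$ this term has size $\lesssim\nu^{1/3}\langle t\rangle^{-2}\,\nu^{-1/3}\epsilon\,\|\omega(t)\|_{H^N}$, i.e. a genuinely quadratic small term which, after the $\nu^{-1}$ weight, is again absorbed by $c\,\mathcal D$ plus an integrable-in-time remainder; the external forcing is handled directly via Duhamel. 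Putting everything together one obtains a closed estimate of the schematic form
\begin{align*}
  \mathcal E(t)\le C_1\,\nu^{-2/3}\epsilon^2+C_2\,\nu^{-\kappa}\,\epsilon\,\sup_{[0,t]}\mathcal E
\end{align*}
for some fixed $\kappa=\kappa(N)$, whence choosing $\epsilon<\epsilon_N(\nu):=c\,\nu^{\kappa}$ improves the bootstrap constant and yields the global bound $\mathcal E(t)\le 10\,\nu^{-2/3}\epsilon^2$.

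The main obstacle is precisely the transport nonlinearity $v\cdot\nabla_t$: since $\nabla_t=(\p_x,\p_y-t\p_x)$ grows linearly in $t$, naive energy estimates lose both derivatives and powers of $t$, and one must spend the $\nu^{1/3}$ enhanced dissipation (through $\mathcal D$) together with the inviscid damping of $v_2$ to beat this growth, tracking the exact power $\nu^{-\kappa}$ that is lost. The reason plain Sobolev regularity is enough here — in contrast to the Gevrey analysis required in \cite{masmoudi2020stability} when only the vorticity is dissipated — is exactly that full vertical dissipation also damps the temperature and thereby suppresses the echo chains driven by the $T'(y)v_2$ coupling that would otherwise force a loss of regularity. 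A secondary but genuine technical point is making rigorous sense of multiplication by the possibly unbounded profile $T'$, which is what condition \eqref{eq:27} is designed to achieve.
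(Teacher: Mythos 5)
Your skeleton — a bootstrap on $H^N$ energies driven by the multiplier-adapted linear estimates, with the nonlinearities treated perturbatively and closed under a smallness condition $\epsilon<\epsilon_N(\nu)$ — matches the paper's strategy, and the splitting into a linear engine plus commutator estimates for the transport term is the right shape. But there is a genuine gap at the point you identify as the ``heart of the matter.'' You propose to absorb the problematic transport contributions into the dissipation $\mathcal D$, arguing that ``for generic frequencies'' the vertical dissipation behaves like $\nu\langle t\rangle^2\|\p_x\omega\|_{H^N}^2$. This is exactly backwards as a description of the difficulty: the dangerous frequencies are the non-generic resonant ones, $|\xi-kt|\lesssim |k|$, where $(\p_y-t\p_x)$ acting on the mode is essentially zero and the vertical dissipation gives \emph{no} control of $\p_x\omega$ at all. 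Every frequency passes through its critical time $t\approx\xi/k$, and these resonant windows are where the Orr mechanism and echo interactions live, so they cannot be dismissed. Moreover, the obstacle is not that $\nabla_t$ grows in $t$ — the $(\p_y-t\p_x)$ component of the transport term is precisely what the vertical dissipation does control — but that the $v^1\p_x\omega$ component carries a horizontal derivative for which there is no dissipation in the resonant region. Your proposal offers no mechanism there, so the closing inequality as written does not follow.

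The paper handles exactly this region by three devices you would need to supply: (i) a Fourier cutoff $\chi=1_{|\xi-kt|\ge|k|}$ separating the region where vertical dissipation dominates the horizontal derivative from the resonant region; (ii) on the resonant region, an integration by parts moving $\p_x$ onto $v^1_{\neq}$ (whose $L^2_tH^N$ norm is controlled by the inviscid-damping/ghost-multiplier part of the bootstrap) together with the fact that the multiplier $M=AB$ depends only on $\xi/k$ and is Lipschitz, so that the commutator $M(k,\xi)-M(k-l,\xi-\zeta)$ produces factors of $|\xi-kt|+|\xi-\zeta-(k-l)t|$ that are again controlled by the vertical dissipation; and (iii) an additional bootstrap quantity $\nu\|1_{|\xi-kt|\le|k|}M\omega_{\neq}\|_{L^2H^N}^2$, controlled not by dissipation but by the time decay of the multiplier $B$ on the resonant interval. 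A secondary remark: you treat the temperature coupling as a decaying quadratic term via pointwise inviscid damping of $v_2$; the paper instead absorbs the linear coupling terms ($\p_x\theta_{\neq}$ in the $\omega$-equation and $T'(y)v_{\neq}^2$ in the $\theta$-equation) into the decay of $\dot M$ exactly as in the linearized analysis of Theorem \ref{thm:bad}, which is the step where the hypothesis \eqref{eq:27} and the $\nu^{1/3}$ smallness of $T'$ actually enter; your ``hypocoercive cross term'' gestures at this but is not carried out.
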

We also obtain time integrability results for $v$, $(\p_y-t\p_x) \omega$ and
$(\p_y-t\p_x)\theta$, which are stated in Sections \ref{sec:hydroimbalance} and \ref{sec:nonlinear} and
omitted here for brevity.

\begin{itemize}
\item In the special case when $T(y)=\alpha y$ is affine the assumption reduces
  to $|\alpha|\leq \frac{1}{100}\nu^{1/3}$.  
\item We stress that $\alpha$ here is allowed to be \emph{negative}.
  As a related result in Lemma \ref{lem:basic2} we remark that the \emph{inviscid}
  results of \cite{yang2018linear} extend to $0\geq \alpha>-2$ when considering
  the vertical component of the velocity $v_2$.
\item If there is no shear, then partial dissipation is not sufficient to restore
  stability of the vorticity for $\alpha<0$ (see Lemma \ref{lem:basic}).
\item A combination of shear and vertical dissipation suffices to restore
  stability of the vorticity. Moreover, in that case we obtain an enhanced
  threshold in terms of $-\nu^{1/3}$.
\item These results further extend to the case of a non-affine, oscillating temperature profile
  $T(y)$. In particular, we do not rely on cancellations or conserved quantities
  available in the hydrostatic balance case.
\item In addition to the linearized Boussinesq equations, we obtain results for
  the nonlinear small data problem, however, only with \emph{full vertical}
  dissipation (considering $T(y)$ non-affine as a solution of the forced problem).
  As recently shown in \cite{masmoudi2020stability} this stronger assumption is probably
  necessary for stability in Sobolev regularity, since otherwise resonance
  chains may yield norm inflation.
% \item As a complementary result to the linear stability, we show that for an
%   oscillating profile of the form $T(y)=c_1 \cos( c_2 y)$ with
%   suitable choice of $c_1,c_2$ resonances occur which break stability. Thus, we
%   show that in this sense smallness conditions depending on $\nu$ are necessary
%   to rule out exponential instabilities.
\end{itemize}

The remainder of the article is structured as follows:
\begin{itemize}
\item In Section~\ref{sec:model} we recall some results for the \emph{inviscid}
  problem, first obtained in \cite{yang2018linear}, to introduce instability mechanisms and to discuss in which sense
  (partial) dissipation is necessary for stability results. With these
  motivations we formulate four main questions Q1-Q4, which we address
  throughout the article.
\item In Section~\ref{thm:good} we begin by studying the special case when
  $T(y)$ is affine, where arguments are more transparent.
  In particular, we show that here the slope of the temperature profile can be allowed to be
  \emph{negative} (colder fluid on top of hotter fluid) and that the size of the threshold
  depends on $\nu$ with an enhanced rate. 
\item In Section~\ref{thm:bad} we extend these linear results to
  the case of a general temperature profile $T(y)$ satisfying suitable smallness
  conditions. In particular, $T$ is allowed to oscillate.
\item Building on the linearized results, in Section \ref{sec:nonlinear} we
  study the nonlinear small data problem.
  Due to possible resonance chains, we here instead consider full vertical
  dissipation and consider $T(y)$ as a solution of the forced problem.
  This extends previous nonlinear results in \cite{zillinger2020enhanced} for
  the affine, increasing case to possibly oscillating profiles.
\end{itemize}

\subsection{Notation}
\label{sec:notation}

Throughout this article we consider solutions of the Boussinesq equations with
vertical dissipation near the stationary solution
\begin{align*}
  v&=
  \begin{pmatrix}
    y \\0
  \end{pmatrix}, \theta = T(y),\\
(t,x,y)&\in (0,\infty) \times \T \times \R.
\end{align*}
In this setting it is natural to work in coordinates moving with the flow
\begin{align*}
  (x+ty,y)
\end{align*}
and consider the equations satisfied by the perturbations in these coordinates.

If there is no possibility of confusion these perturbations are again denoted as
$\omega$ and $\theta$ and the linearized problem studied in Section \ref{sec:hydroimbalance} is given by 
\begin{align*}
  \dt \omega &= \nu (\p_y-t\p_x)^2 \omega + \p_x \theta, \\
  \dt \theta &= T'(y)v_2,\\
  v&= \nabla_t^{\perp}\Delta_t^{-1}\omega,\\
(t,x,y)&\in (0,\infty) \times \T \times \R.
\end{align*}
where
\begin{align*}
  \nabla_t =
  \begin{pmatrix}
    \p_x \\ \p_y-t \p_x
  \end{pmatrix},
  \Delta_t= \p_x^2 + (\p_y-t\p_x)^2
\end{align*}
are the gradient and Laplacian in these coordinates.

In the nonlinear problem considered in Section \ref{sec:nonlinear} we
additionally assume vertical dissipation also in the temperature and interpret
$T(y)$ as a solution of the forced problem.
The system satisfied by the perturbation in coordinates moving with the shear is
then given by
\begin{align*}
  \dt \omega &= \nu (\p_y-t\p_x)^2 \omega +\p_x \theta - v \cdot \nabla_t \omega, \\
  \dt \theta &= T'(y)v_2 - v \cdot \nabla_t \theta,\\
  v&= \nabla_t^{\perp}\Delta_t^{-1}\omega,\\
(t,x,y)&\in (0,\infty) \times \T \times \R.
\end{align*}

We denote the Fourier transform of a function $u(x,y) \in L^2 (\T \times \R)$ by
$\tilde{u}(k,\xi) \in L^2 (\Z \times \R)$ or $\mathcal{F} u$. Furthermore, we
study several Fourier multipliers (see \eqref{eq:22}), including
\begin{align*}
  A(T,\xi, k) &= \exp(-2\int_0^T\frac{1}{1+(\frac{\xi}{k}-t)^2} dt), \\
    B(T,k,\xi) &= \exp(-2 \int_0^T\frac{1}{\sqrt{1+(\frac{\xi}{k}-t)^2}}1_{I}(t) dt),
\end{align*}
with $I$ being a prescribed time interval/Fourier region (see \eqref{eq:26}):
\begin{align*}
  I= \{t\geq 0 : |\frac{\xi}{k}-t|\leq C\},
\end{align*}
and $C$ proportional to $\nu^{-1/3}$.

In Section \ref{sec:nonlinear} we study energy estimates on a given time
interval $(0,T)$. Since $T$ is fixed throughout this section, we omit it from
our notation and for instance write
\begin{align*}
 \|u\|_{L^pH^N}:= \| \| u(t, \cdot)\|_{H^N}\|_{L^p((0,T))}. 
\end{align*}

We write $a \lesssim b$ if there exists a universal constant $C>0$ such that
$|a|\leq C |b|$.

\section{Model Cases of Instability}
\label{sec:model}
In order to introduce ideas and mechanisms, in this section we recall results
available in the literature for
\begin{itemize}
\item The linearized viscous problem without shear around $\theta=\alpha y$, $v=0$ and
\item The linearized inviscid problem with shear around $\theta=\alpha y$, $v=(y,0)$
\end{itemize}
Here, for simplicity we consider viscous dissipation in both horizontal and
vertical direction, but no thermal dissipation.
As a reference for the isolated mechanisms the interested reader is referred to
the textbook by Frisch and Yaglom \cite[Section 2.8.3]{yaglom2012hydrodynamic}.
We emphasize that the results of this section are not new, but serve to motivate
our questions Q1--Q4 stated at the end of this section, which we address in this
article.
Furthermore, they show that under weaker assumptions instabilities may form and
that the conditions in Theorem \ref{thm:main} are in this sense optimal.

In the case without shear, explicit solutions are available and it is known that
the slope of $\theta$ yields a sharp dichotomy between stability and exponential
instability.
The following basic lemma is reproduced from \cite[Proposition 2.6]{zillinger2020enhanced}.
\begin{lem}
  \label{lem:basic}
Consider the Boussinesq equations in vorticity formulation linearized around
 \begin{align*}
  v=(0,0), \theta= \alpha y,
 \end{align*}
 where $\alpha \in \R$:
\begin{align}
  \label{eq:constcoeffcase}
  \begin{split}
  \dt \omega &= \nu \Delta \omega + \p_x \theta, \\
  \dt \theta + \alpha v_2 &= \mu \Delta \theta.
  \end{split}
\end{align}
Here $v_2$ denotes the vertical component of the velocity field.
Further suppose that at least one of $\nu$ or $\mu$ is zero.
The the evolution is stable if $\alpha>0$ in the sense that for every $N \in \N$ the energy 
\begin{align}
  \alpha \|\omega\|_{H^N}^2 + \|\nabla \theta\|_{H^N}^2
\end{align} 
is decreasing.
In contrast, if $\alpha<0$, there exist solutions which grow exponentially in time.
\end{lem}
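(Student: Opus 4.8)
The plan is to exploit the constant-coefficient structure of \eqref{eq:constcoeffcase}: a Fourier transform in both $x$ and $y$ turns the system into a decoupled family of $2\times 2$ linear ODEs indexed by the frequency $(k,\xi)\in\Z\times\R$. Writing $\kappa^2:=k^2+\xi^2$ and using the Biot--Savart law $v_2=\p_x\Delta^{-1}\omega$, whose Fourier symbol is $-ik/\kappa^2$, the equations become
\begin{align*}
  \dt\tilde\omega &= -\nu\kappa^2\,\tilde\omega + ik\,\tilde\theta, \\
  \dt\tilde\theta &= -\mu\kappa^2\,\tilde\theta + \tfrac{i\alpha k}{\kappa^2}\,\tilde\omega .
\end{align*}
Since the $H^N$-weight $(1+\kappa^2)^N$ is a fixed multiplier, $\alpha\|\omega\|_{H^N}^2+\|\nabla\theta\|_{H^N}^2 = \int_{\Z\times\R}(1+\kappa^2)^N E_\kappa\,dk\,d\xi$ with energy density $E_\kappa:=\alpha|\tilde\omega|^2+\kappa^2|\tilde\theta|^2$, so everything reduces to estimates at a single frequency.

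For the stability claim ($\alpha>0$) I would differentiate $E_\kappa$ directly. From the two ODEs,
\begin{align*}
  \dt|\tilde\omega|^2 = -2\nu\kappa^2|\tilde\omega|^2 + 2\,\mathrm{Re}(ik\tilde\theta\,\overline{\tilde\omega}), \qquad \dt|\tilde\theta|^2 = -2\mu\kappa^2|\tilde\theta|^2 + \tfrac{2\alpha}{\kappa^2}\,\mathrm{Re}(ik\tilde\omega\,\overline{\tilde\theta}),
\end{align*}
and the two cross terms combine into $2\alpha\,\mathrm{Re}\big(ik(\tilde\theta\overline{\tilde\omega}+\overline{\tilde\theta}\tilde\omega)\big)=0$, since $k\in\Z$ is real and $\tilde\theta\overline{\tilde\omega}+\overline{\tilde\theta}\tilde\omega\in\R$. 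Hence
\begin{align*}
  \dt E_\kappa = -2\alpha\nu\kappa^2|\tilde\omega|^2 - 2\mu\kappa^4|\tilde\theta|^2 \leq 0
\end{align*}
for $\alpha>0$ and $\nu,\mu\geq 0$, and integrating in frequency gives that $\alpha\|\omega\|_{H^N}^2+\|\nabla\theta\|_{H^N}^2$ is non-increasing. The one genuinely load-bearing point is that this exact cancellation needs precisely the relative weighting in $\alpha\|\omega\|^2+\|\nabla\theta\|^2$ (the factor $\alpha$ on the vorticity and the full gradient on the temperature); with any other relative weight an indefinite term would survive.

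For the instability claim ($\alpha<0$) this is where the hypothesis $\nu\mu=0$ enters. At a fixed frequency the system takes the matrix form $\dt U = M_\kappa U$, $U=(\tilde\omega,\tilde\theta)$, with
\begin{align*}
  M_\kappa=\begin{pmatrix} -\nu\kappa^2 & ik \\ \tfrac{i\alpha k}{\kappa^2} & -\mu\kappa^2 \end{pmatrix},\qquad \det M_\kappa = \nu\mu\kappa^4 + \tfrac{\alpha k^2}{\kappa^2} .
\end{align*}
Since $\nu\mu=0$, for every mode with $k\neq 0$ and $\alpha<0$ we get $\det M_\kappa<0$, so $M_\kappa$ has two real eigenvalues of opposite sign and in particular a positive eigenvalue $\lambda_+(k,\xi)$; at $(k,\xi)=(1,0)$ one finds $\lambda_+=\tfrac12\big(-(\nu+\mu)+\sqrt{(\nu+\mu)^2+4|\alpha|}\big)>0$. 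Taking initial data whose Fourier transform is a smooth bump supported near $(1,0)$ and lying, at each frequency, in the $\lambda_+$-eigenspace, the solution is $e^{\lambda_+ t}$ times a fixed nonzero profile (up to the harmless variation of $\lambda_+$ over the small support), so it grows exponentially in every $H^N$. Beyond these two computations the argument is bookkeeping: the only place care is needed is in getting the symbol of $v_2$ and the signs right, so that the cross-term cancellation for $\alpha>0$ and the sign of $\det M_\kappa$ for $\alpha<0$ come out as stated.
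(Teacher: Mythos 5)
Your proposal is correct and follows essentially the same route as the paper: Fourier reduction to a $2\times 2$ system per mode, cancellation of the imaginary cross terms in the weighted energy $\alpha|\tilde\omega|^2+\kappa^2|\tilde\theta|^2$ for $\alpha>0$ (the paper phrases this as anti-Hermitian off-diagonal entries after rescaling, which is the same computation), and a positive eigenvalue for $\alpha<0$ with data localized in frequency. Your observation that $\nu\mu=0$ forces $\det M_\kappa=\alpha k^2/\kappa^2<0$ is a marginally cleaner way to reach the sign conclusion than the paper's explicit eigenvalue formula via $(\nu+\mu)^2=(\nu-\mu)^2$, but it is the same underlying fact.
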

As we show in Lemma \ref{lem:basic2} when adding shear the instability for
$\alpha<0$ is significantly reduced and the evolution of $v_2$ is even
asymptotically stable if $\alpha$ is not too large.
\begin{proof}
  In the interest of accessibility, we reproduce the main steps of the proof from \cite{zillinger2020enhanced}.

We observe that equation \eqref{eq:constcoeffcase} is a constant coefficient PDE and hence we obtain a decoupled system of ODEs for each Fourier mode with respect to $x$ and $y$:
\begin{align}
  \label{eq:3}
  \dt
  \begin{pmatrix}
    \tilde{\omega} \\
    \tilde{\theta}
  \end{pmatrix}
=
  \begin{pmatrix}
    -\nu (k^2+\xi^2) & ik \\
    \frac{ik \alpha}{k^2+\xi^2} & -\eta (k^2+\xi^2)
  \end{pmatrix}
   \begin{pmatrix}
    \tilde{\omega} \\
    \tilde{\theta}
   \end{pmatrix},     
\end{align}
where we use $\tilde{\omega}$ to denote the Fourier transform of the vorticity
and $k \in \Z, \xi \in \R$ to denote the Fourier variables.
In particular, we may study the problem at each frequency.
\\

\underline{The case $\alpha>0$:} Let  $(k,\xi)$ and $\alpha>0$ be given. Then we may reformulate the problem as 
\begin{align}
  \dt
  \begin{pmatrix}
    \sqrt{\alpha} \tilde{\omega} \\
    \sqrt{k^2+\xi^2} \tilde{\theta}
  \end{pmatrix}
=
  \begin{pmatrix}
    -\nu (k^2+\xi^2) & \frac{ik\sqrt{\alpha}}{\sqrt{k^2+\xi^2}} \\
    \frac{ik \sqrt{\alpha}}{\sqrt{k^2+\xi^2}} & -\eta (k^2+\xi^2)
  \end{pmatrix}
   \begin{pmatrix}
    \sqrt{\alpha} \tilde{\omega} \\
    \sqrt{k^2+\xi^2} \tilde{\theta}
   \end{pmatrix}.     
\end{align}
Note that the off-diagonal entries are equal and purely imaginary. Therefore, if we denote the matrix by $M$ it holds that $M+\overline{M}^T$ is a real-valued, negative semi-definite diagonal matrix.
Hence it follows that 
\begin{align*}
  \frac{d}{dt} \left|\begin{pmatrix}
    \sqrt{\alpha} \tilde{\omega} \\
    \sqrt{k^2+\xi^2} \tilde{\theta}
  \end{pmatrix} \right|^2 = \overline{\begin{pmatrix}
    \sqrt{\alpha} \tilde{\omega} \\
    \sqrt{k^2+\xi^2} \tilde{\theta}
  \end{pmatrix}} \cdot (M+\overline{M}^T) \begin{pmatrix}
    \sqrt{\alpha} \tilde{\omega} \\
    \sqrt{k^2+\xi^2} \tilde{\theta}
  \end{pmatrix} \leq 0.
\end{align*}
Integrating this estimate with respect to $\xi$ and $k$ (possibly with respect to a weight $\langle(k,\xi)\rangle^N$) it follows that 
\begin{align*}
  \alpha \|\tilde{\omega}\|_{L^2}^2 + \|\sqrt{k^2+\xi^2}\tilde{\theta}\|_{L^2}^2
\end{align*}
is non-increasing. The claimed result thus follows by Plancherel's theorem.
\\

\underline{The case $\alpha<0$:} Let $(k,\xi)$ with $k\neq 0$ and $\alpha<0$ be given.
Then the eigenvalues of the matrix
  \begin{align*}
   \begin{pmatrix}
    -\nu (k^2+\xi^2) & ik \\
    \frac{ik \alpha}{k^2+\xi^2} & -\eta (k^2+\xi^2)
  \end{pmatrix}   
  \end{align*}
  are given by
  \begin{align*}
    \lambda_{1,2}&= -\frac{\nu+\mu}{2} (k^2+\xi^2) \pm \sqrt{(\frac{\nu+\mu}{2} (k^2+\xi^2))^2 - \nu \eta (k^2+\xi^2)^2 - \alpha \frac{k^2}{k^2+\xi^2}}\\
    &=-\frac{\nu+\mu}{2}(k^2+\xi^2) \pm \sqrt{\left( \frac{\nu-\mu}{2}(k^2+\xi^2) \right)^2 - \alpha \frac{k^2}{k^2+\xi^2}},
  \end{align*}
  where we used the binomial formula $(a+b)^2-4ab=(a-b)^2$ in the last step.
  
  We recall that by assumption (at least) one of $\nu$, $\eta$ vanishes.
  Therefore we define $C=\max(\nu, \eta)$ and observe that
  \begin{align*}
    (\eta+\nu)^2 = (\eta-\nu)^2 =:C^2
  \end{align*}
  and that
  \begin{align*}
    \lambda_1 = - C (k^2+\xi^2) + \sqrt{C^2 (k^2+\xi^2) + (-\alpha)\frac{k^2}{k^2+\xi^2}},
  \end{align*}
  is strictly positive, since $(-\alpha)\frac{k^2}{k^2+\xi^2}$ is positive. 
  This matrix thus has a positive eigenvalue and there exist solutions of
  \eqref{eq:3} which grow exponentially in time.
  Given these exponentially growing solutions on single Fourier modes, we next
  construct exponentially growing solutions in $H^N$.
  We may pick a compact set in Fourier space, e.g. a ball, and construct
  initial data $(\omega_0, \theta_0) \in H^N\times H^{N+1}$ by prescribing the
  Fourier transform of the initial data to match these solutions (and vanish outside
  the ball). The corresponding solution then also exhibits exponential growth in
  time.
\end{proof}
We remark that in the inviscid case, $\nu=\mu=0$, these eigenvalues further simplify to 
\begin{align*}
  \pm \sqrt{-\alpha \frac{k^2}{k^{2}+\xi^2}},
\end{align*}
which are either purely imaginary if $\alpha>0$ or
positive and negative if $\alpha<0$.
Thus, if $\alpha>0$ (hotter fluid is above) the evolution is not
exponentially unstable. One speaks of \emph{hydrostatic equilibrium}. The
stability of this solution in the inviscid setting has recently been studied in
\cite{elgindi2015sharp,widmayer2018convergence}.

In contrast if $\alpha<0$ (that is, the fluid is hotter below) then one
eigenvalue is positive and the solution is exponentially unstable.
This phenomenon is known as \emph{Rayleigh-B\'enard} instability. One main question in the following will then be whether
a shear flow can suppress this instability.

Having discussed the effects of dissipation without shear. We next consider the
effects of an affine shear in the inviscid problem, where again explicit
solutions are available.
The following results have been previously obtained in \cite{yang2018linear,
  zillinger2020enhanced,masmoudi2020stability} for $\alpha>0$. By minor
modifications of the proof the results further extend to negative $\alpha$ and
higher Sobolev norms.
\begin{lem}
  \label{lem:basic2}
  Consider the linearized inviscid Boussinesq equations in vorticity formulation around
  \begin{align*}
    v=(y,0), \theta=\alpha y
  \end{align*}
  and coordinates $(x+ty,y)$ moving with the the shear.
  Furthermore, define $c= \frac{1}{2} \Re(\sqrt{1-4\alpha}) \in [0,\infty)$.
  Then the velocity and temperature satisfy the following estimates
  \begin{align*}
    \|\theta\|_{H^N}&\lesssim t^{-1/2+c}(\|\omega_0\|_{H^{N+1}}+ \|\theta_0\|_{H^{N+2}}),\\
    \|v_1-\langle v_1 \rangle\|_{H^N}&\lesssim t^{-1/2+c}( \|\omega_0\|_{H^{N+1}}+ \|\theta_0\|_{H^{N+2}}),\\
    \|v_2\|_{H^N}&\lesssim t^{-3/2+c}( \|\omega_0\|_{H^{N+2}}+ \|\theta_0\|_{H^{N+3}}),
  \end{align*}
  and are thus stable if $c<\frac{1}{2}$ ($\alpha>0$) and  $c<\frac{3}{2}$
  ($\alpha>-2$), respectively.
  They are unstable if $c>\frac{1}{2}$ ($\alpha<0$) or $c> \frac{3}{2}$ ($\alpha<-2$).

  The evolution of the vorticity in contrast is unstable for all $\alpha$ in the
  sense that there exists non-trivial initial data such that
  \begin{align*}
    \|\omega(t)\|_{H^N}&\geq C t^{1/2+c}( \|\omega_0\|_{H^N}+ \|\p_x \theta_0\|_{H^{N}})
  \end{align*}
  as $t\rightarrow \infty$.
\end{lem}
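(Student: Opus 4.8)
The plan is to diagonalize the linearized evolution in the $x$--Fourier variable, reduce it to a single second--order scalar ODE in $t$ that after a shift of the time variable no longer depends on the Fourier parameters and is of hypergeometric type, read the algebraic rates off its indicial exponents at $t=\infty$, and use the Biot--Savart law to supply the extra powers of $t$ distinguishing $\omega$, $v_1$ and $v_2$. Concretely, passing to the Fourier transform in $(x,y)$ and writing $w(t):=\tilde\omega(t,k,\xi)$, the computation in the proof of Lemma~\ref{lem:basic}, now keeping the shear (that is, replacing $\xi$ by $\xi-tk$ in the Biot--Savart symbols), gives for each $k\neq0$
\begin{align*}
 \ddot w=-\frac{\alpha}{1+(\tfrac{\xi}{k}-t)^2}\,w,\qquad w(0)=\tilde\omega_0,\quad \dot w(0)=ik\,\tilde\theta_0,
\end{align*}
while the $k=0$ modes are stationary, which is exactly why one subtracts the $x$--average $\langle v_1\rangle$ and only claims growth of $\omega$ for data with a nontrivial $k\neq0$ part. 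The remaining unknowns are recovered algebraically from $w$: $\tilde\theta=\tfrac1{ik}\dot w$, $\tilde v_1=\tfrac{i(\xi-tk)}{k^2+(\xi-tk)^2}w$ and $\tilde v_2=\tfrac{-ik}{k^2+(\xi-tk)^2}w$, so that $|\tilde\theta|\lesssim t^{-1}|w|$, $|\tilde v_1|\lesssim t^{-1}|w|$ and $|\tilde v_2|\lesssim t^{-2}|w|$ for $t$ large, and it suffices to understand $w$.

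After substituting $\tau=t-\xi/k$ the scalar ODE becomes $w''+\tfrac{\alpha}{1+\tau^2}w=0$, independent of $(k,\xi)$; it has regular singular points at $\tau=\pm i$ and $\tau=\infty$ and is therefore explicitly solvable in terms of Gaussian hypergeometric functions. Its indicial exponents at $\tau=\infty$ are the roots $\mu_\pm=\tfrac12(1\pm\sqrt{1-4\alpha})$ of $\mu(\mu-1)+\alpha=0$, with $\Re\mu_\pm=\tfrac12\pm c$, so there is a fundamental system with $w_\pm(\tau)=\tau^{\mu_\pm}(1+o(1))$ and $w_\pm'(\tau)=\mu_\pm\tau^{\mu_\pm-1}(1+o(1))$ as $\tau\to+\infty$, a logarithmic correction arising only at the resonant values $\sqrt{1-4\alpha}\in\{0,1,2,\dots\}$ — among them the thresholds $\alpha=0$ and $\alpha=-2$ of the statement — and affecting only the subdominant branch $w_-$, hence not the leading rate of a generic solution. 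The Wronskian $W(w_+,w_-)=\mu_--\mu_+$ is a nonzero constant for $c\neq0$ (in the degenerate and oscillatory cases one uses the explicit connection formulae for the hypergeometric function instead), so every solution is $w=a_+w_++a_-w_-$ with $(a_+,a_-)$ a linear bijection of the Cauchy data $(\tilde\omega_0,ik\tilde\theta_0)$; continuing $w_\pm$ back to $\tau\to-\infty$, where they again display Euler behaviour $|\tau|^{\mu_\pm}$, shows that this bijection and its inverse grow at most polynomially in $\langle\xi/k\rangle\le\langle\xi\rangle$.

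Feeding $w=a_+w_++a_-w_-$ into the recovery formulae and using $|k|\ge1$ to absorb the Biot--Savart factors then yields, for $t$ large, $|\tilde\omega(t)|\lesssim\langle\xi\rangle^{M}t^{1/2+c}(|\tilde\omega_0|+|\tilde\theta_0|)$, $|\tilde\theta(t)|,|\tilde v_1(t)|\lesssim\langle\xi\rangle^{M}t^{-1/2+c}(|\tilde\omega_0|+|\tilde\theta_0|)$ and $|\tilde v_2(t)|\lesssim\langle\xi\rangle^{M}t^{-3/2+c}(|\tilde\omega_0|+|\tilde\theta_0|)$ for a fixed $M$; multiplying by $\langle k,\xi\rangle^N$ and integrating in $(k,\xi)$ gives the stated $H^N$ estimates with a fixed finite loss of derivatives (recorded in the statement as $1$, $2$, $3$), and hence stability of $v_1,\theta$ when $c<\tfrac12$ and of $v_2$ when $c<\tfrac32$. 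For the instability statements one fixes a single generic mode, say $k=1$, $\xi=0$ together with its complex conjugate to keep the solution real, for which $a_+\neq0$ and hence $|w(t)|\gtrsim t^{1/2+c}$ (read as a $\limsup$, with a $\log$ in the oscillatory and resonant cases); since no derivative is lost for a single mode this gives $\|\omega(t)\|_{H^N}\gtrsim t^{1/2+c}(\|\omega_0\|_{H^N}+\|\p_x\theta_0\|_{H^N})$, and the analogous growth of $v_1,\theta$ for $c>\tfrac12$ and of $v_2$ for $c>\tfrac32$.

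The main obstacle is the uniformity in $(k,\xi)$ in the last two steps: one has to control the connection coefficients $a_\pm$ and the passage of each Fourier mode through its critical time $t=\xi/k$ uniformly, which is precisely what forces the (harmless) loss of a fixed number of derivatives, and the resonant thresholds $\alpha\in\{\tfrac14,0,-2\}$, where the indicial exponents collide or differ by an integer, have to be treated separately. For $\alpha>0$ all of this is contained in \cite{yang2018linear,zillinger2020enhanced,masmoudi2020stability}; allowing $\alpha<0$ only changes the bookkeeping of the sign of $1-4\alpha$ (real versus complex exponents, position of the thresholds), and carrying the weight $\langle k,\xi\rangle^N$ through the estimates is what upgrades $L^2$ to $H^N$.
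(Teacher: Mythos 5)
Your proposal is correct and follows essentially the same route as the paper: Fourier reduction in the sheared coordinates to the scalar equation $w''+\tfrac{\alpha}{1+\tau^2}w=0$, explicit hypergeometric solutions with indicial exponents $\tfrac12(1\pm\sqrt{1-4\alpha})$ at infinity, and the Biot--Savart/Orr gains of $t^{-1}$ and $t^{-2}$ for $v_1-\langle v_1\rangle$ and $v_2$. If anything you are more careful than the paper's sketch (which passes to the model equation $f''+\alpha t^{-2}f=0$ and argues by asymptotic matching) on the points of connection coefficients, resonant exponents, and uniformity in $(k,\xi)$, which is where the fixed derivative loss in the statement comes from.
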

We emphasize that for $v_2$ we may
allow $0>\alpha>-2$ to be negative and that the evolution of the vorticity $\omega$ is unstable for
any $\alpha$.

We remark that this combination of stability and instability is consistent with
the Orr mechanism.
More precisely, by an integration by parts argument it holds that
\begin{align*}
  \|v_1-\int v_1 dx \|_{L^2}\leq C t^{-1} \|\omega(t)\|_{H^{1}}.
\end{align*}
Hence, if the velocity is asymptotically stable with a sharp decay rates of
for instance $t^{-1/2}$, this implies that the vorticity is algebraically unstable in $H^{1}$
with a growth rate at least $t^{-1/2+1}=t^{1/2}$.

\begin{proof}
  As in the proof of Lemma \ref{lem:basic} we consider the Fourier formulation,
  now in coordinates $(k, \xi+kt)$ moving with the shear:
  \begin{align*}
    \dt
    \begin{pmatrix}
      \tilde{\omega}\\ \tilde{\theta}
    \end{pmatrix}
=
    \begin{pmatrix}
      0 & ik \\
      \frac{ik \alpha}{k^2+(\xi-kt)^2} & 0 
    \end{pmatrix}
                                         \begin{pmatrix}
                                           \tilde{\omega}\\ \tilde{\theta}
                                         \end{pmatrix}.
  \end{align*}
  Due to the vanishing diagonal structure, we may decouple this problem as
  \begin{align*}
    \dt^2 \tilde{\omega}&= - \frac{\alpha k^2}{k^2+(\xi-kt)^2} \tilde{\omega}, \\
    \dt \tilde{\omega} &= ik \tilde{\theta}.
  \end{align*}
  After relabeling and shifting time by $\frac{\xi}{k}$, we observe that the first equation
  corresponds to a Schrödinger equation with potential:
  \begin{align*}
    (\dt^2 + \frac{\alpha}{1+t^2}) u =0.
  \end{align*}
  As observed in \cite{yang2018linear} this problem can be solved explicitly in terms of hypergeometric functions:
  \begin{align}
    \label{eq:1}
    \begin{split}
    u(t)&= c_1 \hyp(-\frac{1}{4}-\frac{1}{4}\sqrt{1-4\alpha}, -\frac{1}{4}+\frac{1}{4}\sqrt{1-4\alpha}, \frac{1}{2}, -t^2) \\
    &\quad + c_2 \ t\  \hyp(\frac{1}{4}-\frac{1}{4}\sqrt{1-4\alpha}, \frac{1}{4}+\frac{1}{4}\sqrt{1-4\alpha}, \frac{3}{2}, -t^2).
    \end{split}
  \end{align}
  As $t\rightarrow \infty$, it holds that $\hyp(a,b,c,-t^2)\sim C t^{-2a}$ (see
  \cite[15.8(ii)]{NIST:DLMF}).
  The same asymptotic behavior is exhibited by the approximate problem
  \begin{align*}
    (\dt^2 + \frac{\alpha}{t^2}) f=0,
  \end{align*}
  which we use to simplify discussion in the following.
  Making the ansatz $f=t^{\beta}$, we obtain that
  \begin{align}
    \label{eq:4}
    \begin{split}
    f&= c_1 t^{\beta_1} + c_2 t^{\beta_2}, \\
    \beta_{1,2}&= \frac{1}{2}(1\pm \sqrt{1-4\alpha}),
    \end{split}
  \end{align}
  which matches the asymptotic behavior of the hypergeometric functions in
  \eqref{eq:1}.
  In particular, we observe that for any $\alpha$, $\beta_1$ has positive real
  part which results in an algebraic instability of $f$ and hence $\tilde{\omega}$.
  When considering the velocity and temperature, we recall that
  \begin{align*}
    ik \theta = \p_t \omega \sim \p_t f
  \end{align*}
  and that the Biot-Savart law combined with the shear by $(y,0)$ provides a
  gain of $t^{-1}$ for $v_1-\langle v_1 \rangle$ and by $t^{-2}$ for $v_2$ by
  the Orr mechanism.
  Hence, we deduce that
  \begin{align*}
    \|v_1-\langle v_1 \rangle\|_{H^N} + \|\theta\|_{H^N} \sim t^{\beta_1 -1}, \\
    \|v_2\|_{H^N}\sim t^{\beta_1-2}
  \end{align*}
  with $\beta_1$ as in \eqref{eq:4}.
  In particular, we observe that
  \begin{align*}
    \Re(\beta_1 -1)= c + \frac{1}{2} <0 \text{ if } \alpha >0, \\
    \Re(\beta_1 -1)= c + \frac{1}{2} >0 \text{ if } \alpha<0, \\
    \Re(\beta_1 -2)= c + \frac{3}{2} <0 \text{ if } \alpha >-2, \\
    \Re(\beta_1 -2)= c + \frac{3}{2} >0 \text{ if } \alpha <-2,
  \end{align*}
  where we used that $\sqrt{1-4\alpha}=1 \Leftrightarrow \alpha=0$ and
  $\sqrt{1-4\alpha}=3 \Leftrightarrow \alpha=-2$.
\end{proof}

Given these (in)stability results our main questions in this article are the following:
\begin{enumerate}
\item[Q1] \label{Q1} How much dissipation (and in which directions) needs to be added to
  restore linear stability?
\item[Q2] \label{Q2} Can we allow $\alpha$ to be negative and how does the threshold depend on the
  dissipation?
\item[Q3] \label{Q3} When considering the problem without thermal dissipation, it is natural to
  consider the more general problem around $v=(y,0)$, $\theta=T(y)$. Under which
  conditions on $T$ are such solutions linearly stable? For instance, can we
  allow $T$ to oscillate?
\item[Q4] \label{Q4} Do these results extend to the nonlinear small data regime and if so how
  do stability regions depend the dissipation coefficients (that is, what perturbations can be considered ``small'')?
\end{enumerate}
In this paper we focus on the case without thermal dissipation and $v=(y,0)$,
$\theta=T(y)$.
The converse problem without viscous dissipation and $v=(U(y),0)$,
$\theta=\alpha y$ or time-dependent shear and temperature profile could be of future interest.
We address questions Q1 and Q2 in Section \ref{sec:linear} and Q3 in Section
\ref{sec:nonaffine}.
The question Q4 of nonlinear stability is addressed in Section \ref{sec:nonlinear}.

\section{Shear can Counteract Hydrostatic Imbalance}
\label{sec:hydroimbalance}
Building on the results of Lemma \ref{lem:basic2} for a combination of Couette
flow and an unstable affine temperature profile, in this section we consider the
problem with partial dissipation.

More precisely, we consider the nonlinear Boussinesq equations with vertical
dissipation of the velocit and without thermal diffusion:
\begin{align*}
  \dt v + v \cdot \nabla v + \nabla p &= \nu \p_y^2 v +
  \begin{pmatrix}
    0 \\ \theta 
  \end{pmatrix}, \\
  \dt \theta + v \cdot \nabla \theta &= 0.
\end{align*}
As remarked in the introduction, in Section \ref{sec:nonlinear} we additionally
impose vertical thermal diffusion, but do not require it for the linear
stability results of this section.

We observe that for any $\beta \in \R$ and any function $T(y)$,
the collection
\begin{align*}
  v&=
  \begin{pmatrix}
     \beta y \\ 0
  \end{pmatrix},\\
   \theta&= T(y),\\
    p&=\int^y T(s)ds,
\end{align*}
is a stationary solution of these equations.
As remarked in Section \ref{sec:model} it is natural to ask about the stability of such solutions.

In Section \ref{sec:model} we studied some related special cases when $T(y)$ is
affine:
\begin{itemize}
\item In Lemma \ref{lem:basic} we studied the problem with trivial shear, that is $\beta=0$.
In this setting the flow turned out to be linearly stable if $T$ is increasing
and linearly exponentially unstable if $T$ is decreasing, even if the slope is
very small.
\item In Lemma \ref{lem:basic2} we instead considered the case with shear but with
trivial dissipation and saw that while the exponential instability is reduced to
an algebraic one, the evolution of the vorticity is unstable.
\end{itemize}
The aim of this article is to understand how these results change when adding
partial dissipation and whether they extend to more general profiles $T$.
In this section we study the linearized problem first for the case of $T$ affine
(answering questions Q1, Q2)
and then for general $T$ in Section \ref{sec:nonaffine} (answering Q3).
The nonlinear problem with full vertical dissipation is discussed in Section
\ref{sec:nonlinear}, which answers Q4.
The author would like to thank Charlie Doering for raising the question of the
stability of pairs $v=(U(y),0), \theta = T(y)$ in a discussion.

\subsection{Affine Temperature}
\label{sec:linear}
In order to introduce ideas and mechanisms, we first study the case
\begin{align*}
  T(y)=\alpha y,
\end{align*}
where we allow $\alpha\in \R$ to be \emph{negative} with a threshold depending on
$\nu$.
More precisely, it turns out that for this special linearized problem we may
allow $\alpha$ to be arbitrarily large, but for the nonlinear setting of Section
\ref{sec:nonlinear} and the non-affine problem we require a bound by
$\nu^{1/3}$. Shear enhanced dissipation suppresses Rayleigh-B\'enard instability
in this case, thus answering questions Q1 and Q2 of Section \ref{sec:model}.

We remark that results for $\alpha$ positive have been previously established in \cite{zillinger2020enhanced}.
As the main novelties of this article, we show that even if $\alpha$ is negative
(but small) stability holds and that we may further allow $T$ to be non-affine
(see Section \ref{sec:nonaffine}).

\begin{thm}
    \label{thm:good}
    Consider the linearized Boussinesq equations around $v=(y,0), \theta=\alpha y$ in coordinates 
    \begin{align*}
      (t, x-ty, y)
    \end{align*}
    moving with the shear flow:
    \begin{align*}
      \dt \omega &= \nu (\p_y-t\p_x) \omega + \p_x\theta, \\
    \dt \theta &= \alpha v_2,
    \end{align*}
    on the domain $\T \times \R$.
    Then there exists $\alpha_*=-\frac{1}{100} \sqrt[3]{\nu} <0$ such that the
    linearized evolution is stable at the level of the vorticity for any
    $\alpha$ with $|\alpha|< \alpha_*$.
    More precisely, for any $N\in \N$ there exists a constant $0<C=C(\nu,\alpha)$ such that for all times $t>0$ it holds that
    \begin{align*}
      \|\omega(t)\|_{H^N}+ \|\p_x\theta(t)\|_{H^{N}}\leq C  \|\omega_0\|_{H^N}+  C \|\p_x \theta_0\|_{H^{N}},
    \end{align*}
    where $\omega_0, \theta_0$ denote the initial data.
\end{thm}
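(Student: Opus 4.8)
The plan is to work mode-by-mode in Fourier space, as in the proof of Lemma~\ref{lem:basic2}, but now carrying the dissipative term. Writing $\tilde\omega(t,k,\xi)$ and $\tilde\theta(t,k,\xi)$ for the Fourier coefficients, the system becomes, for each fixed $k\neq 0$ (the $k=0$ mode being trivially handled since $\p_x\theta$ and $\p_x v_2$ vanish there),
\begin{align*}
  \dt\tilde\omega &= -\nu k^2\Big(\tfrac{\xi}{k}-t\Big)^2\tilde\omega + ik\,\tilde\theta,\\
  \dt\tilde\theta &= \alpha\,\widetilde{v_2} = \alpha\,\frac{-ik}{k^2+(\xi-kt)^2}\,\tilde\omega = \frac{-i\alpha}{k}\,\frac{1}{1+(\frac{\xi}{k}-t)^2}\,\tilde\omega,
\end{align*}
using $\widehat{v_2}=\p_x\Delta_t^{-1}\omega$. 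After shifting time by $\xi/k$ and rescaling, this is a two-dimensional nonautonomous linear ODE whose antisymmetric part is $\alpha$-independent and whose diagonal part is $-\nu k^2 t^2\,\mathrm{diag}(1,0)$ (in time-shifted variables). The strategy is to build a time-dependent weighted energy
\[
  E(t) = |\tilde\omega(t)|^2 + m(t)\,|\tilde\theta(t)|^2 + \text{(a small correction term } 2\,\mathrm{Re}(c(t)\,\tilde\omega\overline{\tilde\theta})\,)
\]
with weights $m(t),c(t)$ chosen so that $\dt E \le 0$, or at worst $E(t)\le C E(0)$ uniformly in $k,\xi$.

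The key steps, in order: (1) Normalize by introducing $q=\frac{\xi}{k}-t$ so that the "potential" $\frac{1}{1+q^2}$ and the dissipation $\nu k^2 q^2$ appear explicitly; note $\int_0^\infty \frac{dt}{1+(\frac{\xi}{k}-t)^2}\le\pi$, which is exactly the integrability feeding the multiplier $A$ in the notation section. (2) For the frequency range where enhanced dissipation dominates, i.e.\ $|q|$ not too small, use that $\nu k^2 q^2$ beats the buoyancy coupling $|k|$; for the remaining "resonant" window $|q|\lesssim$ something, the coupling is bounded and one closes via the symmetrized energy $\sqrt{|\alpha|}|\tilde\omega|^2 + \dots$ as in the $\alpha>0$ case of Lemma~\ref{lem:basic}, paying a loss only on the bounded-length window. (3) Quantify: the total energy amplification across the resonant window is controlled by $\exp\big(C|\alpha|^{1/2}\cdot(\text{window length})\big)$; choosing the window length $\sim\nu^{-1/3}$ (the enhanced-dissipation timescale, cf.\ the definition of $I$ and the constant $C\propto\nu^{-1/3}$) forces the requirement $|\alpha|\lesssim\nu^{1/3}$, i.e.\ $|\alpha|<\alpha_*=\frac{1}{100}\nu^{1/3}$, and yields a constant $C=C(\nu,\alpha)$. (4) Finally, integrate the mode-wise bound $|\tilde\omega(t,k,\xi)|^2+|\tilde\theta(t,k,\xi)|^2\le C(|\tilde\omega_0|^2+|\tilde\theta_0|^2)$ against the weight $\langle(k,\xi+kt)\rangle^{2N}$; since the time-shift in Fourier space is an isometry on each $H^N$-weighted shell and commutes with the mode-wise ODE, Plancherel gives the stated $H^N\times H^N$ bound.

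I expect the main obstacle to be step (2)–(3): reconciling the antisymmetric buoyancy coupling (which, unlike the $\alpha>0$ case, cannot be absorbed into a positive-definite symmetrized energy when $\alpha<0$) with the degenerate, time-dependent dissipation that vanishes precisely at the resonant time $t=\xi/k$. The natural fix is to treat the resonant window $\{|{\tfrac{\xi}{k}-t}|\le C\nu^{-1/3}\}$ as a bad set on which one merely propagates a Grönwall-type bound using the $\frac{1}{1+q^2}$ factor — giving a multiplicative loss bounded by the $B$-multiplier of the notation section — and to use genuine enhanced dissipation only outside it; the bookkeeping that the loss stays $O(\nu^{-2/3})$ (hence finite and $\alpha$-independent in order) is the delicate point, and it is exactly where the $\nu^{1/3}$ threshold is forced. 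Everything else — the $k=0$ mode, the passage to $H^N$, the time-shift isometry — is routine.
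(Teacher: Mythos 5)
Your proposal follows essentially the same route as the paper: a mode-wise energy in the moving Fourier frame, a splitting into a resonant window $|\tfrac{\xi}{k}-t|\le C\sim\nu^{-1/3}$ and its complement, a Gr\"onwall bound on the window exploiting the decay of the coupling $\tfrac{1}{1+q^2}$ (the $A$ and $B$ multipliers), and Young's inequality against the vertical dissipation outside. The identification of the main obstruction (loss of the symmetric cancellation for $\alpha<0$ combined with the dissipation degenerating at $t=\xi/k$) and of the fix (accept a bounded multiplicative loss on the window) is exactly the paper's.

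One piece of your quantification is off, and taken literally it would not close at the stated threshold. In step (3) you bound the amplification across the resonant window by $\exp\bigl(C|\alpha|^{1/2}\cdot(\text{window length})\bigr)$, i.e.\ $\exp(|\alpha|^{1/2}\nu^{-1/3})$; this is uniformly bounded only for $|\alpha|\lesssim\nu^{2/3}$, not $\nu^{1/3}$, so the stated conclusion does not follow from that formula. The correct accounting --- which your final paragraph in fact gestures at --- is that the coupling on the window is not of constant size $\sqrt{|\alpha|}$ but decays like $\sqrt{|\alpha|}(1+q^2)^{-1/2}$, so the Gr\"onwall exponent is $\sqrt{|\alpha|}\int_{|q|\le C}(1+q^2)^{-1/2}\,dq\lesssim(1+|\alpha|)\ln(1+C^2)$ and the window loss is the \emph{polynomial} factor $(1+C^2)^{1+|\alpha|}\sim\nu^{-2/3}$, finite for any bounded $\alpha$. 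In the paper the smallness $|\alpha|\lesssim\nu^{1/3}$ is instead consumed in the \emph{non-resonant} region, where the off-diagonal term (of size $\sqrt{|\alpha|}/C$) is absorbed into the dissipation $\nu(k^2+(\xi-kt)^2)$ by Young's inequality at the cost of a factor $\exp(|\alpha|/(\nu C^2))=\exp(|\alpha|\nu^{-1/3})$; indeed, as the paper remarks, in this affine case one is free to choose $C$ differently and allow $\alpha$ arbitrary, so the $\nu^{1/3}$ threshold is not ``forced'' by the window at all. If you replace the $\exp(\sqrt{|\alpha|}\cdot\text{length})$ bound by the logarithmic-integral bound and move the smallness requirement to the non-resonant absorption, your argument matches the paper's and yields the constant $C(\nu,\alpha)\sim\nu^{-2/3}$.
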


We stress that here we can allow $\alpha$ to be negative and that for $0< \nu
<1$, the threshold $\nu^{1/3}$ is improved compared to the dissipative scale.

In particular, for this special setting we may even consider $\alpha\in \R$
arbitrary, but in view of later results focus on the case of small negative
$\alpha$.

  \begin{proof}[Proof of Theorem \ref{thm:good}]
Similarly to the proof of Lemma \ref{lem:basic} we may equivalently express the
linearized Boussinesq equations around the affine temperature profile in Fourier variables as:
\begin{align}
  \label{eq:9}
      \dt
      \begin{pmatrix}
        \tilde{\omega}\\ k\tilde{\theta}
      \end{pmatrix}
      =
      \begin{pmatrix}
        -\nu (k^2+(\xi-kt)^2) & i \\
        \alpha \frac{i}{1+(\frac{\xi}{k}-t)^2} & 0
      \end{pmatrix}
                                           \begin{pmatrix}
                                            \tilde{\omega}\\ k\tilde{\theta} 
                                           \end{pmatrix},
    \end{align}
    where we consider coordinates $(k,\eta+kt)$ moving with Couette flow. Since the evolution of the $x$-averages of $\omega$ and $\theta$ decouples,
    in the following we without loss of generality only consider $k\neq 0$.
    
    We stress that the coefficients here are time-dependent and hence
this ODE system cannot anymore be explicitly solved in terms of a matrix
exponential.
However, a main advantage of the affine setting is that various estimates completely decouple, restrictions become trivial and
  operators commute, which makes this problem much simpler than the general
  profile case of Section \ref{sec:nonaffine} or the nonlinear problem of
  Section \ref{sec:nonlinear}.

  We note that the problem \eqref{eq:9} decouples with respect to $k$ and $\xi$,
  which we thus in the following treat as arbitrary but fixed.
  We then claim that for any $C>1$, $\nu>0$ and any $\alpha \in \R$ it holds that
  \begin{align}
    \label{eq:10}
    |\tilde{\omega}(t)|^2 + k^2 |\tilde{\theta}(t)|^2 \leq (1+\frac{1}{|\alpha|}) (1+C^2) \exp(\frac{|\alpha|}{\nu C^2}) (|\tilde{\omega}(0)|^2 + k^2 |\tilde{\theta}(0)|^2)
  \end{align}
  and thus the solution at time $t$ is controlled in terms of the initial data.
  We observe some special cases for the exponential:
\begin{itemize}
\item If we choose $C=1$ we obtain a bound by
  \begin{align}
    \exp(\frac{|\alpha|}{\nu}).
  \end{align}
  This bound holds for all $\alpha$, but suggests a threshold $|\alpha|<\nu$.
\item If we choose $C=\nu^{-1/4}$ we obtain a bound by
  \begin{align}
    \exp(|\alpha|),
  \end{align}
  where only the algebraic prefactors depends on $\nu$.
\item If we choose $C=\nu^{-1/3}$ we obtain a bound by
  \begin{align}
    \exp(|\alpha| \nu^{-1/3}),
  \end{align}
  where the exponent becomes uniformly bounded if we assume that $|\alpha|<\nu^{1/3}$.
\end{itemize}
The results of the theorem follow from the third case, where estimates in $H^N$
are obtained by integrating the frequency-wise bound \eqref{eq:10}.

  In order to introduce ideas and motivate the definition of $C$ we first discuss
  the case $\alpha>0$.
  
  \underline{Step 1 (symmetrize):} Let $\alpha>0$ be given. That is, suppose we
  are in the setting of hydrostatic balance.
  Then one commonly exploited feature in the setting without shear is
  cancellation of the purely imaginary off-diagonal entries (compare \cite{zillinger2020enhanced,doering2018long}).

  Indeed, consider the rescaled problem
  \begin{align}
    \dt
    \begin{pmatrix}
      \sqrt{\alpha} \tilde{\omega}\\ \sqrt{k^2+(\xi-kt)^2}\tilde{\theta} 
    \end{pmatrix}
    =
    \begin{pmatrix}
      -\nu (k^2+ (\xi-kt)^2) & \frac{i\sqrt{\alpha}}{\sqrt{1+(\frac{\xi}{k}-t)^2}} \\
      \frac{i\sqrt{\alpha}}{\sqrt{1+(\frac{\xi}{k}-t)^2}} & \frac{t- \frac{\xi}{k}}{1+(\frac{\xi}{k}-t)^2}
    \end{pmatrix}
               \begin{pmatrix}
      \sqrt{\alpha}\tilde{\omega}\\\sqrt{k^2+(\xi-kt)^2} \tilde{\theta} 
    \end{pmatrix}  
  \end{align}
  We observe that the off-diagonal entries are then exactly equal and imaginary
  and thus cancel under the matrix-valued map $M \mapsto M + \overline{M}^{T}$.
  
  Therefore, if we denote the square of the Euclidean norm of the vector as 
  \begin{align*}
      E(t):= \alpha |\tilde{\omega}|^2 + (k^2+(\xi-kt)^2)|\tilde{\theta}|^2
  \end{align*}
  it holds that 
  \begin{align*}
    \dt E(t)
    &= - \nu (k^2+(\xi-kt)^2)\alpha |\tilde{\omega}|^2 + \frac{t- \frac{\xi}{k}}{1+(\frac{\xi}{k}-t)^2}(k^2+(\xi-kt)^2)|\tilde{\theta}|^2\\
    &\leq \frac{\min(0,t- \frac{\xi}{k})}{1+(\frac{\xi}{k}-t)^2} E(t)
  \end{align*}
    Integrating in time and using that
    \begin{align*}
      \int_0^t \frac{\min(0,t- \frac{\xi}{k})}{1+(\frac{\xi}{k}-t)^2} \leq \ln (1+t^2)
    \end{align*}
    it follows that
    \begin{align}
      \label{eq:11}
      E(t)\leq (1+t^2)E(0).
    \end{align}
    Thus, irrespective of the size of $\alpha>0$ and of $\nu \geq 0$ we have shown that the
    evolution in $H^N$ is at most algebraically unstable.

    \underline{Step 2(Using dissipation):}
    Compared to our desired result, the estimate by \eqref{eq:11} is not yet
    sufficient, since it is not uniform in time.

    In the following we hence modify the definition of $E$ to also make use of
    the dissipation.
    More precisely, we introduce a cut-off
    \begin{align}
      C> 1
    \end{align}
    to be specified later and define the resonant time interval
    \begin{align}
      \label{eq:26}
    I:= \{t\geq 0 : |\frac{\xi}{k}-t|\leq C\}.
    \end{align}
    Then it holds that
    \begin{align}
      \label{eq:8}
      \begin{split}
      & \quad \dt
      \begin{pmatrix}
        \sqrt{\alpha} \tilde{\omega}\\ \sqrt{k^2+\min((\xi-kt)^2, C^2)}\tilde{\theta} 
      \end{pmatrix}
      \\
     & =
      \begin{pmatrix}
        -\nu (k^2+ (\xi-kt)^2) & \frac{ik\sqrt{\alpha}}{\sqrt{k^2+\min((\xi-kt)^2, C^2)}} \\
        \frac{ik\sqrt{\alpha}}{\sqrt{k^2+\min((\xi-kt)^2, C^2)}}\frac{\sqrt{k^2+\min((\xi-kt)^2, C^2)}}{\sqrt{1+(\frac{\xi}{k}-t)^2}} & \frac{t- \frac{\xi}{k}}{1+(\frac{\xi}{k}-t)^2} 1_{I}(t)
      \end{pmatrix}
                                                                                                                                      \begin{pmatrix}
      \sqrt{\alpha}\tilde{\omega}\\ \sqrt{k^2+\min((\xi-kt)^2, C^2)} \tilde{\theta} 
    \end{pmatrix},  
      \end{split}
  \end{align}
  where $1_I(t)\in \{0,1\}$ denotes the indicator function of $I$.
  We then define the modified energy as 
  \begin{align}
    \label{eq:12}
  E(t):= \alpha \|\omega\|_{H^N}^2 + \|\p_x\theta\|^2 + \|\min(\xi-kt, C)\tilde{\theta}\|_{L^2_N}^2
  \end{align}
  
\underline{Step 2a (resonant region):}
If $t \in I$ and $\alpha>0$ the problem and the definition of $E(t)$ are
identical to the one considered in Step 1 and it follows that
\begin{align}
  \label{eq:7}
  \dt E(t) \leq  \frac{\min(0,t- \frac{\xi}{k})}{1+(\frac{\xi}{k}-t)^2} E(t).
\end{align}
However, by definition of the interval $I$ it holds that
\begin{align*}
  \int_{I} \frac{\min(0, t - \frac{\xi}{k})}{1+(\frac{\xi}{k}-t)^2} dt \leq \ln (1+C^2)
\end{align*}
and thus the growth of $E$ during the resonant time is bounded by $(1+C^2)$.

\underline{Step 2b (non-resonant region):}
Next suppose that $t \not \in I$ and thus $|\frac{\xi}{k}-t|$ is large.
In particular, $(\xi-kt)^2 \leq k^2+ (\xi-kt)^2 \leq 2 (\xi-kt)^2$ and thus vertical dissipation
is comparable to full dissipation.

Then the off-diagonal entries in \eqref{eq:8} can be estimated as 
\begin{align*}
  \left|\frac{i\sqrt{\alpha}}{\sqrt{1+\min((\frac{\xi}{k}-t)^2, C^2)}} \right| &= \frac{\sqrt{\alpha}}{C^2}, \\
  \left| \frac{i\sqrt{\alpha}}{\sqrt{1+\min((\frac{\xi}{k}-t)^2, C^2)}}\frac{\sqrt{k^2+\min((\xi-kt)^2, C^2)}}{\sqrt{1+(\frac{\xi}{k}-t)^2}} \right| &\leq \frac{\sqrt{\alpha}}{C^2}.
\end{align*}
Thus, using Young's inequality with
\begin{align*}
  \frac{\sqrt{\nu} \sqrt{k^2+(\xi-kt)^2}}{\sqrt{\nu}\sqrt{k^2+(\xi-kt)^2}},
\end{align*}
we deduce that
\begin{align}
  \label{eq:6}
  \begin{split}
  \dt E(t)&\leq -\frac{\nu}{2} (k^2+(\xi-kt)^2) \alpha |\tilde{\omega}|^2 + \frac{\alpha}{\nu C^4} \frac{1}{k^2+(\xi-kt)^2} |\sqrt{k^2+\min((\xi-kt)^2, C^2)} \tilde{\theta}|^2 \\
  &\leq \frac{\alpha}{\nu C^4} \frac{1}{k^2+(\xi-kt)^2} E(t).
  \end{split}
\end{align}
We note that the factor on the right-hand-side is integrable in time.

\underline{Step 2c (Conclusion for $\alpha>0$)}
Combining the resonant estimate \eqref{eq:7} and the non-resonant estimate \eqref{eq:6}, we deduce that
\begin{align}
  \begin{split}
  \dt E(t)&\leq (1_{I}(t)\frac{\min(0,t- \frac{\xi}{k})}{1+(\frac{\xi}{k}-t)^2} + (1-1_{I}(t))\frac{\alpha}{\nu C^4} \frac{1}{k^2+(\xi-kt)^2}) E(t) \\
 \Rightarrow E(t)&\leq (1+C^2) \exp(\frac{\alpha}{\nu C^4}) E(0),
  \end{split}
\end{align}
with $E(t)$ defined in \eqref{eq:12}. The claimed estimate \eqref{eq:10} for $\alpha>0$ then follows by
comparing $E(t)$ with the squares of the $H^N$ norms.
It remains to discuss the case of negative $\alpha$.

\underline{Step 3 (negative $\alpha$)}
Let now $\alpha<0$ be given and consider the problem rescaled by
$\sqrt{|\alpha|}$ instead.
Then the evolution equation \eqref{eq:8} reads
\begin{align}
  \label{eq:13}
   & \quad  \dt
    \begin{pmatrix}
      \sqrt{|\alpha|} \tilde{\omega}\\ \sqrt{k^2+\min((\xi-kt)^2, C^2)}\tilde{\theta} 
    \end{pmatrix}
  \\
  &=
    \begin{pmatrix}
      -\nu (k^2+ (\xi-kt)^2) & \frac{i\sqrt{|\alpha|}}{\sqrt{1+\min((\frac{\xi}{k}-t)^2, C^2)}} \\
      - \frac{i\sqrt{|\alpha|}}{\sqrt{1+\min((\frac{\xi}{k}-t)^2, C^2)}}\frac{\sqrt{1+\min((\frac{\xi}{k}-t)^2, C^2)}}{\sqrt{1+(\frac{\xi}{k}-t)^2}} & \frac{t- \frac{\xi}{k}}{1+(\frac{\xi}{k}-t)^2} 1_{I}(t)
    \end{pmatrix}
               \begin{pmatrix}
      \sqrt{|\alpha|}\tilde{\omega}\\ \sqrt{k^2+\min((\xi-kt)^2, C^2)} \tilde{\theta} 
    \end{pmatrix}. 
  \end{align}
  We thus define the energy as
  \begin{align*}
    E(t)= |\alpha| |\tilde\omega|^2 + (k^2+\min((\xi-kt)^2, C^2)) |\tilde{\theta}|^2,
  \end{align*}
  which agrees with the previous definition if $\alpha>0$.

  \underline{Step 3a (non-resonant region):}
  Suppose that $t \not \in I$. We observe that in Step 2b we did not make use of
  the sign of $\alpha$ but only used Young's inequality.
  Furthermore, in that region $|\xi-kt|\geq |k|$ and thus in this region
  vertical dissipation dominates full dissipation.
  
  Hence, by the same argument we may deduce that also for our extended definition
  of $E(t)$ it holds that 
  \begin{align}
     \dt E(t) \leq \frac{\alpha}{\nu C^4} \frac{1}{k^2+(\xi-kt)^2} E(t).
  \end{align}

  \underline{Step 3b (resonant region):}
  Suppose that $t \in I$. Then we observe that off-diagonal terms in \eqref{eq:13} are of the
  same size but have the opposite sign an hence do not cancel anymore.
  However, we may use Young's inequality to still bound
  \begin{align*}
    \dt E(t) \leq -\nu (k^2+(\xi-kt)^2)|\alpha||\tilde{\omega}|^2 + \frac{|\alpha|}{\sqrt{1^2+(\frac{\xi}{k}-t)^2}} E(t)
    + \frac{t- \frac{\xi}{k}}{1+(\frac{\xi}{k}-t)^2} E(t) \\
    \leq \frac{1+|\alpha|}{\sqrt{1^2+(\frac{\xi}{k}-t)^2}} E(t),
  \end{align*}
  which yields a bound on the total growth by
  \begin{align*}
    (1+C^2)^{1+|\alpha|}.
  \end{align*}

  Combining the estimates in the resonant and non-resonant region, we deduce
  that 
  \begin{align}
    E(t)\leq (1+C^2)^{1+|\alpha|}\exp(\pi \frac{|\alpha|}{\nu C^4}) E(0).
  \end{align}
  In particular, choosing $C=\nu^{-1/3}$ and supposing that
  $|\alpha|<\min(\nu^{1/3}, 1)$, this estimate reduces to
  \begin{align*}
    E(t)\leq (1+\nu^{-2/3})^2 e^{\pi} E(0),
  \end{align*}
  which implies the result.

  We remark that in the proof for negative $\alpha$ we have not relied on
  cancellation but only on smallness of $\sqrt{|\alpha|}$ in combination with
  Young's inequality. Hence, we may consider a modification of the energy $E(t)$
  as
  \begin{align*}
    |\hat{\alpha}| |\tilde{\omega}|^2 + (k^2+ \min((\xi-kt)^2, C^2))|\tilde{\theta}|^2
  \end{align*}
  with $\hat{\alpha}=\max(|\alpha|, \nu^{1/3})$ and repeat the same proof, since
  $\frac{|\alpha|}{\sqrt{\hat{\alpha}}}< \nu^{1/6}$ and
  $\sqrt{\hat{\alpha}}<\nu^{1/6}$ satisfy the desired inequalities.
\end{proof}

We remark that in the proof of this affine case we can allow $\alpha$
to be arbitrarily large and are also free to choose $C$ arbitrarily.
As we discuss in the following, if $T'$ is non-constant or if we study the
nonlinear problem, smallness of $\alpha$ is required in the proof.
In view of resonances in the related linear inviscid damping problem
\cite{deng2019smallness} some form of smallness condition is probably necessary.

\subsection{Non-affine Temperature}
\label{sec:nonaffine}

Having discussed the setting of affine hydrostatic (im)balance, we next consider
$T(y)$ non-affine and address the question Q3 of Section \ref{sec:model} under
which conditions on $T$ in terms of $\nu$ such solutions are stable.
Here the problem does not decouple in frequency anymore and we thus employ a
by now classical Cauchy-Kowalewskaya or ghost energy approach (compare
\cite{Villani_long,bedrossian2015inviscid,Zill5}).

The linearized system around $\theta=T(y)$ in Lagrangian coordinates is given by: 
\begin{align}
  \label{eq:5}
  \begin{split}
  \dt \omega &= \nu (\p_y-t\p_x)^2 \omega + \p_x \theta, \\
  \dt \p_x \theta&= -T'(y)\p_x^2 \Delta^{-1}_t \omega,
  \end{split}
\end{align}
where we applied a derivative in $x$ to the second equation.
Since the evolution of the $x$-averages decouples, we assume without loss of
generality that
\begin{align}
  \int \omega dx = 0 =  \int \theta dx
\end{align}
throughout this section.

Our main results are summarized in the following theorem.
\begin{thm}
\label{thm:bad}
  Let $T(y)$ be a given temperature profile, $N \in \N$ and consider the linearized
  Boussinesq equations \eqref{eq:5} with vertical dissipation $\nu>0$.
  Further suppose that the Fourier transform of $T'$ satisfies
  \begin{align}
    \label{eq:18}
    \int (1+|\xi|)^{N+5} |\mathcal{F}(T')(\xi)|  \leq 4^{-N} \nu^{1/3},
  \end{align}
  then for any initial data $\omega_{0}, \theta_{0} \in H^N \times H^{N+1}$ it
  holds that
  \begin{align*}
    \|\omega(t)\|_{H^N}^2 + \|\p_x \theta(t)\|_{H^N}^2 \\
    \leq C (1+\nu^{-2/3})^2 ( \|\omega_0\|_{H^N}^2 + \|\theta_0\|_{H^{N+1}}^2).
  \end{align*}
\end{thm}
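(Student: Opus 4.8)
The plan is to mimic the structure of the proof of Theorem \ref{thm:good}, but to replace the frequency-by-frequency ODE analysis (which fails because $T'(y)$ couples different vertical frequencies) by an energy estimate in $H^N$ using a time-dependent Fourier multiplier, i.e.\ a ghost/Cauchy--Kowalewskaya weight as in \cite{bedrossian2015inviscid,Zill5}. Concretely, I would introduce the multipliers $A(t,k,\xi)$ and $B(t,k,\xi)$ from the Notation section, set $m(t,k,\xi) := A(t,k,\xi) B(t,k,\xi)$ (so $m$ is decreasing in $t$, bounded above and below by constants depending on $\nu$ through $C \sim \nu^{-1/3}$, with $-\dot m/m$ supplying positive "ghost" terms concentrated near the resonant time $t \approx \xi/k$), and define the energy
\begin{align*}
  E(t) := \| m(t,\nabla)\, \langle\nabla\rangle^N \omega(t)\|_{L^2}^2 + \| m(t,\nabla)\, \langle\nabla\rangle^N \p_x\theta(t) / (?) \|_{L^2}^2 + (\text{lower-order }\theta\text{ term}),
\end{align*}
in analogy with the energy \eqref{eq:12}: the vorticity is weighted by a factor $\sim |\alpha|$ or $\sim \nu^{1/3}$, the temperature by $k^2 + \min((\xi-kt)^2, C^2)$, and the whole thing is multiplied by $m^2$. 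The point is that $B$ is designed so that $-\dot B/B$ dominates the resonant off-diagonal contribution $\tfrac{1}{\sqrt{1+(\xi/k-t)^2}} 1_I$ and $A$ takes care of the $\tfrac{t-\xi/k}{1+(\xi/k-t)^2}$ term, exactly reproducing the ``resonant region'' bound of Theorem \ref{thm:good}, while the dissipation term $-\nu(\p_y-t\p_x)^2$ handles the non-resonant region as in Step~2b.

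The key steps, in order: (1) differentiate $E(t)$ in time and expand using \eqref{eq:5}, collecting the dissipation term $-2\nu\| m\langle\nabla\rangle^N (\p_y-t\p_x)\omega\|^2$, the ghost terms $-2\dot m/m$-weighted positive contributions, and the ``commutator'' term coming from $T'(y)\p_x^2\Delta_t^{-1}\omega$ in the $\theta$-equation. (2) Treat the coupling term: in the affine case $T' = \alpha$ this is just multiplication by $\alpha$, but now $\widehat{T'(y)u}(k,\xi) = \int \mathcal F(T')(\xi-z)\,\hat u(k,z)\,dz$, so pairing against $m\langle\nabla\rangle^N \p_x\theta$ produces a bilinear expression in $\omega$ and $\theta$ convolved against $\mathcal F(T')$; one uses \eqref{eq:18} (or the stronger \eqref{eq:27}) together with the fact that $m(t,k,z)/m(t,k,\xi)$ and $\langle (k,z)\rangle^N/\langle(k,\xi)\rangle^N$ are controlled by $(\langle z\rangle/\langle\xi\rangle + \langle\xi\rangle/\langle z\rangle)^N$ type factors — which is precisely why \eqref{eq:27} has that shape and why one loses $N+5$ derivatives on $T'$ in \eqref{eq:18} (five powers absorb $\p_x^2\Delta_t^{-1}$ and the resonant-region discrepancy between $C^2$ and $(\xi-kt)^2$). (3) After this reduction, the resonant part of the coupling is bounded by $\tfrac{1}{\sqrt{1+(\xi/k-t)^2}}1_I \cdot (\text{const}\cdot\nu^{-1/3})\cdot\|\mathcal F(T')\|$-type weight times $E(t)$, which by the smallness assumption \eqref{eq:18} is $\leq \tfrac{1}{2}(-\dot B/B) E(t)$, hence absorbed; the non-resonant part is handled by Young's inequality against the $\nu$-dissipation exactly as in Step~2b of Theorem \ref{thm:good}, giving an integrable-in-time factor $\sim \tfrac{\nu^{1/3}}{\nu C^4}\tfrac{1}{k^2+(\xi-kt)^2}$. (4) Conclude $\dot E(t) \leq 0$ (or $\leq$ an integrable multiple of $E$ with bounded integral), hence $E(t) \lesssim E(0)$, and translate back: since $m \sim 1$ up to constants depending on $C=\nu^{-1/3}$ and the temperature weight is between $k^2$ and $k^2+C^2 \sim \nu^{-2/3}$, this yields $\|\omega(t)\|_{H^N}^2 + \|\p_x\theta(t)\|_{H^N}^2 \lesssim (1+\nu^{-2/3})^2(\|\omega_0\|_{H^N}^2 + \|\theta_0\|_{H^{N+1}}^2)$, using one extra derivative on $\theta_0$ to pass from $\theta_0$ to $\p_x\theta_0$.

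The main obstacle is step (2): controlling the commutator/convolution term uniformly in $t$ and $\nu$. Unlike the affine case, where the off-diagonal coefficient $\tfrac{\alpha}{\sqrt{1+(\xi/k-t)^2}}$ lives on a single frequency and the energy inequality decouples, here the term $T'(y)\,\p_x^2\Delta_t^{-1}\omega$ mixes frequencies $z$ and $\xi$ and one must show that the ghost weight $m(t,k,\xi)$ and the Sobolev weight $\langle\nabla\rangle^N$ do not degrade the smallness. This forces the somewhat awkward norm on $\mathcal F(T')$ appearing in \eqref{eq:27}/\eqref{eq:18}: one needs $\int|\mathcal F(T')(\xi-z)|$ times a frequency-ratio factor of order $N$ times the ``resonant enhancement'' factor $1+\min(\nu^{-2/3},|\xi-z|^{2/3})$ (which measures how much $m(t,k,z)/m(t,k,\xi)$ can blow up when $z$ and $\xi$ straddle the resonant window of width $\sim\nu^{-1/3}$) to be $\lesssim \nu^{1/3}$. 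Verifying that this single scalar bound dominates all the error terms in the $H^N$ energy estimate — and in particular that the worst case is indeed when the $\nu^{1/3}$ threshold is saturated — is the technical heart of the argument; everything else is a careful but routine repetition of the affine computation with $\alpha$ replaced by the operator norm of multiplication by $T'$ against the relevant weights.
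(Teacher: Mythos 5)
Your plan follows essentially the same route as the paper: a ghost energy built from the multipliers $A$ and $B$ with the vorticity weighted by $\alpha\sim\nu^{1/3}$ and the temperature by $\hl=\mathcal{F}^{-1}\sqrt{k^2+\min((\xi-kt)^2,\nu^{-2/3})}\mathcal{F}$, the coupling term $T'(y)\p_x^2\Delta_t^{-1}\omega$ treated as a Fourier convolution, and the operator norm of multiplication by $T'$ (conjugated by $B$, the Sobolev weight and the $\nabla_t$-weights) controlled by a Schur-test bound whose kernel is dominated by $|\mathcal{F}(T')(\xi-\zeta)|(1+|\xi-\zeta|)^{N+5}$ --- this is exactly how the paper derives \eqref{eq:18} and why the $N+5$ appears.

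There is one step where your outline, as written, would not close: in step (3) you split the coupling term into ``a resonant part absorbed by $-\dot B/B$'' and ``a non-resonant part absorbed by the dissipation.'' This dichotomy is only valid when both factors in the pairing sit at the \emph{same} vertical frequency, which is precisely what fails for non-constant $T'$: the convolution against $\mathcal{F}(T')$ shifts frequencies, so at a fixed time $t$ the $\theta$-factor can lie outside the resonant window $|\xi/k-t|\leq\nu^{-1/3}$ while the $\omega$-factor lies inside it. In that mixed regime neither mechanism you invoke is available --- $\dot B$ vanishes on the $\theta$ side and the vertical dissipation $\nu(\zeta-kt)^2$ degenerates on the $\omega$ side. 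The paper isolates exactly this term by splitting $\omega$ and $\theta$ with the time-dependent Fourier cut-off $\Omega(t)=\{|\xi/k-t|\leq\nu^{-1/3}\}$ (in the affine case the cross term $\langle\theta_{\text{out}},T'\omega_{\text{in}}\rangle$ vanishes by orthogonality of Fourier supports, so nothing of this kind appears in Theorem \ref{thm:good}); it is then closed by trading powers of $\p_x^2\Delta_t^{-1}$ between the two factors, using the always-active decay of $A$ on $\theta_{\text{out}}$, which is why the definition of $\alpha$ in the proof carries the \emph{second} operator norm $\|(\p_x^2\Delta_t^{-1})^{-3/2}BT'B^{-1}B(\p_x^2\Delta_t^{-1})^{3/2}\|_{H^N\to H^N}$ in addition to the one you anticipate. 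You correctly flag the frequency mixing as the technical heart, but the specific repair --- the in/out decomposition and this extra conjugated norm --- is the one genuinely new ingredient of the non-affine case and needs to be supplied explicitly.
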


We remark that \eqref{eq:18} here is a sufficient condition to control several
commutators. We expect that in particular for large $N$ it is far from
sufficient and that it for instance would suffice to assume smallness for small
$N$ and only a finite norm for large $N$ (compare \cite{zillinger2019linear}).
Furthermore, if $T$ happens to be strictly increasing, stability is expected
also for large norms of $T'$. The main focus of this theorem thus lies on cases
where $T$ may be oscillating.
In the case $T(y)=\alpha y$, the (tempered) Fourier transform is given by a
Dirac measure and the condition \eqref{eq:18} reduces to $|\alpha|< \nu^{1/3}$, as in Section \ref{sec:linear}.

\begin{proof}
  In Section \ref{sec:linear} we had seen that in the special case when
  $T'(y)=\alpha y$ is affine, the functions $\theta, \omega$ satisfy the frequency-wise
  bound
  \begin{align}
    \label{eq:2}
    \begin{split}
    & \quad \dt (|\alpha| |\tilde{\omega}|^2 + (k^2+ \min((\xi-kt)^2, \nu^{-2/3})|\tilde{\theta}|^2) \\
    &\leq ( \frac{1}{\sqrt{1+(\frac{\xi}{k}-t)^2}}1_{I} + \frac{1}{1+(\frac{\xi}{k}-t)^2}) (|\alpha| |\tilde{\omega}|^2 + (k^2+ \min((\xi-kt)^2, \nu^{-2/3})|\tilde{\theta}|^2).
    \end{split}
  \end{align}

  Similarly to the (linear) inviscid damping problem in the Euler equations,
  while this frequency-wise bounds fail in the general setting, an integrated
  version can be shown to hold more generally.
  More precisely, we define two Fourier weights
  \begin{align}
    \label{eq:22}
    A(T,\xi, k) &= \exp(-2\int_0^T\frac{1}{1+(\frac{\xi}{k}-t)^2} dt), \\
    B(T,k,\xi) &= \exp(-2 \int_0^T\frac{1}{\sqrt{1+(\frac{\xi}{k}-t)^2}}1_{I} dt),
  \end{align}
  where we included a factor $2$ to have additional flexibility to absorb errors.

  Then in this affine case the estimate \eqref{eq:2} implies that if we define the energy
  \begin{align}
    \label{eq:14}
    E(t)= \alpha \|AB \omega\|_{H^N}^2 + \|AB \mathcal{F}^{-1} (k^2+ \min((\xi-kt)^2, \nu^{-2/3}) \mathcal{F} \theta\|_{H^N}^2,
  \end{align}
  where $\omega, \theta$ is a solution for $T(y)=\alpha y$, then $E(t)$ is
  non-increasing and moreover satisfies the decay estimate
  \begin{align*}
    \dt E(t)\leq - \iint ( \frac{1}{\sqrt{1+(\frac{\xi}{k}-t)^2}}1_{I} + \frac{1}{1+(\frac{\xi}{k}-t)^2}) (|\alpha| |AB \tilde{\omega}|^2 + (k^2+ \min((\xi-kt)^2, \nu^{-2/3})|AB \tilde{\theta}|^2).
  \end{align*}
  In particular, $E(t)$ is non-increasing and the inequality $E(t)\leq E(0)$
  implies the result of the theorem for the special case when $T$ is affine.\\

  Let now $T(y)$ be given and for simplicity of notation define
  \begin{align}
    \hl = \mathcal{F}^{-1}\sqrt{k^2+\min((\xi-kt)^2, \nu^{-2/3})}\mathcal{F}. 
  \end{align}
  and introduce the constant $\alpha$ in terms of operator norms:
  \begin{align}
    \alpha:= \||\nabla_t|^{-1} B T' B^{-1} |\nabla_t|\|_{H^{N} \mapsto H^{N}} + \|(\p_x^2 \Delta_t^{-1})^{-3/2} B T'(y) B^{-1} B (\p_x^2 \Delta_t^{-1})^{3/2}\|_{H^N \rightarrow H^N}.
  \end{align}
  As the last step of this proof we will show that by \eqref{eq:18} it follows
  that $\alpha\leq \nu^{1/3}$. Similarly as in Theorem \ref{thm:good}, we remark
  that in all the following estimates we may replace $\alpha$ by
  $\hat{\alpha}=\max(\alpha, \nu^{1/3})$ if $\alpha< \nu^{1/3}$.
  
  We now claim that if $E(t)$ is defined by the same formula as in \eqref{eq:14}
  but with $\omega,\theta$ being solutions of the linearized problem with
  temperature profile $T$, then $E(t)$ is non-increasing. This then implies the
  desired estimate by controlling $B$ and $\alpha$ (or $\hat{\alpha}$) in terms
  of $\nu$.

  We hence have to estimate
  \begin{align*}
    \dt E(t)/2 = - \nu \alpha \|\nabla_t AB \omega\|_{H^N}^2 + \alpha \langle  AB \omega, AB \p_x \theta \rangle \\
    + \langle AB \hl \theta, AB T'(y) B^{-1} B \p_x^2 \Delta_t^{-1} \omega \rangle \\
    + \alpha \langle (\dot{A} B + A\dot{B})\omega, AB \omega \rangle\\
    + \langle (\dot{A} B + A\dot{B}) \hl \theta, AB \hl \theta  \rangle \\
    + \langle AB \dot{\hl} \theta,  AB \hl \theta  \rangle \rangle.
  \end{align*}
  Here the dissipation terms and derivatives of $AB$ yield non-negative
  contributions and are thus beneficial and $\dot{B}$ was defined in such a way to
  control
  \begin{align*}
     \langle AB \dot{\hl} \theta,  AB \hl \theta  \rangle.
  \end{align*}
  More precisely, we note that inside the resonant interval $I$,
  \begin{align*}
    \dt \hl^2 = - 2k (\xi-kt) = \frac{-2 (\frac{\xi}{k}-t)}{1+(\frac{\xi}{k-t})^2} \hl^2
  \end{align*}
  can be controlled by $\dot B B$.
  
  It thus remains to estimate
  \begin{align}
    \label{eq:16}
    E_{\omega}:= \alpha \langle  AB \omega, AB \p_x \theta \rangle
  \end{align}
  and
  \begin{align}
    \label{eq:17}
    E_{\theta}:= \langle AB \hl \theta, AB T'(y) B^{-1} B \p_x^2 \Delta_t^{-1} \omega \rangle
  \end{align}

  \underline{Estimating $E_{\omega}$:}  
  Since the evolution equation for $\omega$ does not involve $T'(y)$ we may
  argue as in the affine case and control $E_{\theta}$ frequency-wise.
  More precisely, for any given frequency $(k,\xi)$ we need to control
  \begin{align*}
    \left| \alpha (AB)^2(t,k,\xi) \overline{\tilde{\omega}}(t,k,\xi) ik \tilde{\theta}(t,k,\xi) \right| 
  \end{align*}

  \underline{Resonant region:}
  If $t,k,\xi$ are such that $|\frac{\xi}{k}-t|\leq \nu^{-1/3}$ we may bound by
  this by
  \begin{align*}
    \frac{\sqrt{\alpha}}{\sqrt{1+(\frac{\xi}{k}-t)}} (AB)^2(t,k,\xi) \left[ \alpha |\tilde{\omega}|^2 + \left|\sqrt{1+(\frac{\xi}{k}-t)^2} \tilde{\theta}\right|^2 \right],
  \end{align*}
  which can be absorbed into
  \begin{align*}
    \langle A \dot{B} \omega, AB \omega \rangle +
        \langle A\dot{B} \hl \theta, AB \hl \theta  \rangle
  \end{align*}
  by construction of $B$.
  
  \underline{Non-resonant region:}
  If instead $t,k,\xi$ are such that $|\frac{\xi}{k}-t|\geq \nu^{-1/3}$, then
  $\dot B$ vanishes and we instead make use of the vertical dissipation.
  That is, we estimate
  \begin{align*}
  & \quad  \left| \alpha (AB)^2(t,k,\xi) \overline{\tilde{\omega}}(t,k,\xi) ik \tilde{\theta}(t,k,\xi) \right|
    \\ &= \alpha (AB)^2|\tilde{\omega}| \frac{\sqrt{\nu}\sqrt{k^2+(\xi-kt)^2}}{\sqrt{\nu}\sqrt{k^2+(\xi-kt)^2}} \frac{1}{\sqrt{1+(\frac{\xi}{k}-t)^2}} \sqrt{k^2+\nu^{-2/3}} |\tilde{\theta}|
  \end{align*}
  by the dissipation term
  \begin{align*}
    - \alpha (AB)^2 (\sqrt{\nu}\sqrt{(\xi-kt)^2}|\tilde{\omega}|)^2
  \end{align*}
  and
  \begin{align*}
   (AB)^2 \frac{1}{1+(\frac{\xi}{k}-t)^2} |\sqrt{k^2+\nu^{-2/3}} |\tilde{\theta}|^2. 
  \end{align*}
  Here we used that in the non-resonant region $(\xi-kt)^2$ controls the full
  dissipation.
  
  The latter term can then be absorbed into
  \begin{align*}
    \langle \dot{A}B \hl \theta, AB \hl \theta \rangle
  \end{align*}
  provided
  \begin{align}
    \label{eq:15}
    \frac{\sqrt{\alpha} }{\sqrt{\nu}\sqrt{k^2+(\xi-kt)^2}}
  \end{align}
  is less than $1$.
  Since we are in the non-resonant region \eqref{eq:15} can be bounded from
  above by
  \begin{align*}
    \sqrt{\alpha} \nu^{-1/2+ 1/3} = (\alpha \nu^{-1/3})^{1/2},
  \end{align*}
  which is small since $\alpha < \nu^{1/3}$ by assumption.

  \underline{Estimating $E_{\theta}$:}
  In order to estimate the contribution \eqref{eq:17}
  \begin{align*}
    E_{\theta}= \langle AB \hl \theta, AB T'(y) B^{-1} B \p_x^2 \Delta_t^{-1} \omega \rangle
  \end{align*}
  we follow a similar argument as in the affine case.
  However, as $T'$ is non-constant we further have to control an interaction
  term between the resonant and non-resonant regions.

  More precisely, for any given time $t$ we define the Fourier set
  \begin{align*}
    \Omega(t)=\{(k,\xi): k\neq 0 , |\frac{\xi}{k}-t|\leq \nu^{-1/3}\}.
  \end{align*}
  That is, instead of time interval $I$ associated to given frequencies, we
  consider frequencies for a given time $t$.
  We then split
  \begin{align*}
    \omega &= 1_{\Omega}\omega + (1-1_{\Omega}) \omega =: \omega_{\text{in}} + \omega_{\text{out}}, \\
    \theta &= 1_{\Omega}\theta + (1-1_{\Omega}) \theta =: \theta_{\text{in}} + \theta_{\text{out}}.
  \end{align*}
  We then split the contributions as
  \begin{align*}
    \langle AB \hl \theta , AB T'(y) B^{-1} B \p_x^2 \Delta_t^{-1} \omega_{\text{out}} \rangle \\
    \langle AB \hl \theta_{\text{in}}, AB T'(y) B^{-1} B \p_x^2 \Delta_t^{-1} \omega_{\text{in}} \rangle \\
    \langle AB \hl \theta_{\text{out}}, AB T'(y) B^{-1} B \p_x^2 \Delta_t^{-1} \omega_{\text{in}} \rangle.
  \end{align*}
  We remark that in the affine case the third term identically vanished due to
  the disjoint Fourier support of $\omega_{\text{in}}$ and $\theta_{\text{out}}$, but that
  this orthogonality is lost in the general case.

  \underline{Step 2a ($\omega_{\text{out}}$):}
  We argue as in the affine case. Since $\omega_{\text{out}}$ is supported in $\Omega$
  it holds that
  \begin{align*}
    \|\nabla_t B \p_x^2 \Delta_t^{-1} \omega_{\text{out}} \|_{H^N} \leq \nu^{2/3}\|(\p_y-t\p_x) B \omega \|_{H^N}.
  \end{align*}
  For $\theta$ we do not need a further control of the support and may bound
  \begin{align*}
    \||\nabla_t|^{-1} AB \hl \theta\|_{H^N}
  \end{align*}
  by the time decay of $A$, provided
  \begin{align*}
    \||\nabla_t|^{-1} B T'(y) B^{-1} |\nabla_t|\|_{H^N \rightarrow H^N}\leq \sqrt{\alpha} \nu^{1/6}.
  \end{align*}
  By our choice of $\alpha$ the left-hand-side is bounded by $\alpha$ and this
  estimate is therefore satisfied provided $\alpha <\nu^{1/3}$, as assumed.
  
  \underline{Step 2b ($\theta_{\text{in}}, \omega_{\text{in}}$):}
  Similarly as in the proof of Theorem \ref{thm:good} we use the time decay of
  $B$ to control this contribution.
  More precisely, we may bound this contribution in terms of
  \begin{align*}
   \alpha \langle AB \omega_{\text{in}} ,\sqrt{\p_x^2 \Delta_{t}^{-1}} AB \omega_{\text{in}} \rangle\\
   + \langle AB \hl \theta_{\text{in}} ,\sqrt{\p_x^2 \Delta_{t}^{-1}} AB \hl \theta_{\text{in}} \rangle,
  \end{align*}
  provided
  \begin{align*}
    \|\sqrt{\p_x^2 \Delta_{t}^{-1}} B T'(y) B^{-1}\sqrt{\p_x^2 \Delta_{t}^{-1}}^{-1} \|_{H^N \rightarrow H^N} \leq \sqrt{\alpha}.
  \end{align*}
  We remark that $\sqrt{\p_x^2 \Delta_{t}^{-1}}=|\p_x||\nabla_t|^{-1}$ and that
  $BT'(y)B^{-1}$ does not depend on $x$. Hence this estimate is equivalent to
  the one of step 2a.

  \underline{Step 2c ($\theta_{\text{out}}, \omega_{\text{in}}$)}
  As $T'$ is non-constant the contribution
  \begin{align*}
    \langle AB \hl \theta_{\text{out}}, AB T'(y) B^{-1} B \p_x^2 \Delta_t^{-1} \omega_{\text{in}} \rangle
  \end{align*}
  generally does not vanish.
  However, since $\theta_{\text{out}}$ is supported away from the resonant region, we
  may insert an identity operator $(\p_x^2 \Delta_t^{-1})^{3/2-3/2}$ and bound
  \begin{align*}
    \|AB (\p_x^2 \Delta_t^{-1})^{3/2} \hl \theta_{\text{out}}\|_{H^N} \leq \nu^{2/3} \sqrt{- \langle\dot{A}B \hl \theta_{\text{out}}, AB \hl \theta_{\text{out}}\rangle}
  \end{align*}
  and estimate 
  \begin{align*}
   \| (\p_x^2 \Delta_t^{-1})^{-3/2} AB T'(y) B^{-1} B \p_x^2 \Delta_t^{-1} \omega_{\text{in}}\|_{H^N}
  \end{align*}
  by
  \begin{align*}
    \|(\p_y-t\p_x) AB \omega\|_{H^N}.
  \end{align*}
  This contribution can thus be absorbed by the same argument as in Step 2a,
  provided
  \begin{align*}
    \|(\p_x^2 \Delta_t^{-1})^{-3/2} B T'(y) B^{-1} B (\p_x^2 \Delta_t^{-1})^{3/2}\|_{H^N \rightarrow H^N}\leq \nu^{1/6}\sqrt{\alpha},
  \end{align*}
  which by our definition of $\alpha$ reduces to $\alpha< \nu^{1/3}$.

  \underline{Step 4 (controlling $\alpha$):}
  It remains to be shown that the estimate \eqref{eq:18} controls $\alpha$.
  Here we make use of Schur's test, which controls the $L^2$ operator norm of a
  map
  \begin{align*}
    u(x)\mapsto \int K(x,y) u(y) dy
  \end{align*}
  by the square root of
  \begin{align*}
    \sup_{x}\int |K(x,y)| dy \sup_{y}\int |K(x,y)| dy.
  \end{align*}
  More precisely, we may express the map $u \mapsto |\nabla_t| BT'
B^{-1} |\nabla_t|^{-1} u$ as integration against a kernel on the Fourier side:
\begin{align*}
  \tilde{u}(k,\xi) \mapsto \int \sqrt{k^2+(\xi-kt)^2} B(t,k,\xi) \tilde{T'}(\xi-\zeta) B^{-1}(t,k,\zeta) \sqrt{k^2+(\zeta-kt)^2} \tilde{u}(k,\zeta) d\zeta.
\end{align*}
Since we are further interested in a map on $H^N$ we add an additional weight
\begin{align*}
  \frac{1+|\xi|^{N}}{1+|\zeta|^N}.
\end{align*}
Then Schur's test asks us to control
\begin{align*}
  \sup_{\xi}\int \sqrt{k^2+(\xi-kt)^2} B(t,k,\xi) \tilde{T'}(\xi-\zeta) B^{-1}(t,k,\zeta) \sqrt{k^2+(\zeta-kt)^2}\frac{1+|\xi|^{N}}{1+|\zeta|^N} d\zeta \leq C_1
\end{align*}
and
\begin{align*}
  \sup_{\zeta}\int \sqrt{k^2+(\xi-kt)^2} B(t,k,\xi) |\tilde{T'}(\xi-\zeta)| B^{-1}(t,k,\zeta) \sqrt{k^2+(\zeta-kt)^2}\frac{1+|\xi|^{N}}{1+|\zeta|^N} d\xi \leq C_2,
\end{align*}
which then bounds the $L^2$ operator norm by $\sqrt{C_1C_2}$.

We claim that this kernel can be bounded by $|\tilde{T'}(\xi-\zeta)|
(1+|\xi-\zeta|)^{N+5}$, at which point \eqref{eq:18} implies that
$C_1=C_2=\nu^{1/3}$, which concludes the proof.

Indeed, by construction of $B$, we can control
\begin{align*}
  B(t,k,\xi)B^{-1}(t,k,\zeta) \leq \sqrt{1+|\xi-\zeta|^2}.
\end{align*}

Similarly, if $|\xi|\leq 3 |\zeta|$, we may simply control
\begin{align*}
  \frac{1+|\xi|^{N}}{1+|\zeta|^N} \leq 1+3^N.
\end{align*}
If instead $|\xi|\geq 3 |\zeta|$, then
\begin{align*}
|\xi| \leq |\xi-\zeta| + \frac{1}{3}|\xi| \Leftrightarrow |\xi| \leq \frac{3}{2} |\xi-\zeta|.
\end{align*}
and thus
\begin{align*}
  \frac{1+|\xi|^{N}}{1+|\zeta|^N} \leq (\frac{3}{2})^N (1+|\xi-\zeta|^{N}).
\end{align*}

Finally, we need to control
\begin{align*}
  \frac{k^2+(\xi-kt)^2}{k^2+(\zeta-kt)^2} = \frac{1+ (\frac{\xi}{k}-t)^2}{1+(\frac{\zeta}{k}-t)^2}.
\end{align*}
Here we may simply estimate
\begin{align*}
  (\frac{\xi}{k}-t)^2 \leq 2 (\frac{\zeta}{k}-t)^2 +2 (\xi-\zeta)^2.
\end{align*}
The first term cancels with the numerator, while for the second we simply bound
by $|\xi-\zeta|$.

Thus, in total it suffices to bound
\begin{align*}
  \sup_{\zeta} \int |\tilde{T'}(\xi-\zeta)| (1+|\xi-\zeta|)^{N+5} d\xi <C_1=C_2,
\end{align*}
which is the assumption of our theorem.

\end{proof}

We remark that in the case when $T'$ is increasing stronger results are
possible, for instance allowing $\alpha$ to be much larger, by using additional
cancellations as in Section \ref{sec:linear}.
The main advantage of this theorem hence lies in the fact that we can allow $T'$ to
be decreasing or oscillating.

\section{The Nonlinear Equations with Vertical Dissipation}
\label{sec:nonlinear}
Given the results for the linearized problem, it is natural to ask whether they
extend to the nonlinear perturbed problem:
\begin{align*}
  \dt \omega + y \p_x \omega + v \cdot \nabla \omega &= \nu \p_y^2 \omega + \p_x \theta , \\
  \dt \theta + y \p_x \theta + T'(y) v_2 + v \cdot \nabla \theta &=0,\\
  (t,x,y)&\in (0,\infty)\times \T \times \R,
\end{align*}
and, if so, how this depends on $\nu$.
As shown recently by Masmoudi, Said-Houari and Zhao
\cite{masmoudi2020stability}, this problem may exhibit an instability reminiscent
of echo chains in the Vlasov-Poisson equations \cite{bedrossian2016nonlinear,zillinger2020landau} and Euler equations \cite{dengmasmoudi2018,dengZ2019}.
For this reason, we do not expect results in Sobolev regularity to extend
(without strong modification).
Therefore, in this section we instead consider the more viscous problem
\begin{align}
  \label{eq:19}
  \begin{split}
  \dt \omega + y \p_x \omega + v \cdot \nabla \omega &= \nu \p_y^2 \omega + \p_x \theta , \\
  \dt \theta + y \p_x \theta + T'(y) v_2 + v \cdot \nabla \theta &= \nu \p_y^2 \theta,\\
  (t,x,y)&\in (0,\infty)\times \T \times \R,
  \end{split}
\end{align}
where we impose \emph{full vertical dissipation} and view $T(y)$ as a solution of the
forced problem. 
Similarly to results for the case of hydrostatic balance with shear studied in
\cite{zillinger2020enhanced} our aim here is to extend the linear (asymptotic)
stability results to the nonlinear equations with small data and thus answer
question Q4 of Section \ref{sec:model}.

\begin{thm}
  Let $N\geq 5$ and suppose that the temperature profile $T(y)$ satisfies the
  linear stability assumptions of Theorem \ref{thm:bad}.
  Let further $0< \epsilon< \nu^2$ and suppose that the initial data satisfies
  \begin{align*}
    \|\omega_0\|_{H^N} + \nu^{-1/2}\|\p_x \theta_0\|_{H^N} \leq \epsilon. 
  \end{align*}
  The the unique global solution with this initial data satisfies
  \begin{align*}
    \|\omega\|_{L^\infty H^N} + \nu \|(\p_y-t\p_x) \omega  \|_{L^2 H^N} + \|v_{\neq}\|_{L^2 H^N} &\leq 10 \nu^{-1/3}\epsilon, \\
     \|\p_x \theta\|_{L^\infty H^N} &\leq 10 \epsilon,
  \end{align*}
  where $L^p H^N := L^p((0,\infty); H^N)$ and $v_{\neq}=v- \int v dx$ denotes
  the non-shear component of the velocity.
\end{thm}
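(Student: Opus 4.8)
The plan is to run a bootstrap/continuity argument on the energy that combines the linear ghost-energy machinery of Theorem~\ref{thm:bad} with control of the two nonlinear transport terms $v\cdot\nabla_t\omega$ and $v\cdot\nabla_t\theta$, now exploiting that \emph{both} $\omega$ and $\theta$ enjoy vertical dissipation (hence enhanced dissipation on the non-zero modes). First I would fix a time horizon $T>0$ and define, in analogy with \eqref{eq:14}, the energy
\begin{align*}
  E(t) = \|AB\,\omega(t)\|_{H^N}^2 + \nu^{-1}\|AB\,\hl\,\theta(t)\|_{H^N}^2,
\end{align*}
together with the associated ``good'' dissipation/decay functionals coming from $-\nu\|\nabla_t AB\,\omega\|_{H^N}^2$, $-\nu^{-1}\cdot\nu\|\nabla_t AB\,\hl\theta\|_{H^N}^2$, and the $\dot A B$, $A\dot B$ terms; schematically these control $\nu\|\nabla_t AB\,\omega\|_{L^2H^N}^2$, $\|v_{\neq}\|_{L^2H^N}^2$ (via the Orr/$A$ gain and $\nu^{-2/3}$ truncation, as in Step~2 of Theorem~\ref{thm:bad}), and the resonant-time integrals. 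The bootstrap assumption is $E(t)\le 100\,\nu^{-2/3}\epsilon^2$ on $[0,T_*]$, and the goal is to improve the constant $100$ to something strictly smaller, whence $T_*=\infty$.

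Next I would reproduce, essentially verbatim, the linear estimates of Theorem~\ref{thm:bad}: the $\p_x\theta$ forcing of $\omega$ and the $T'(y)v_2$ forcing of $\theta$ are absorbed exactly as in the $E_\omega$, $E_\theta$ steps there, using the smallness $\alpha\le\nu^{1/3}$ built into the hypothesis on $T'$, the resonant/non-resonant splitting $\Omega(t)=\{|\tfrac{\xi}{k}-t|\le\nu^{-1/3}\}$, and Schur's test (Step~4) to control the commutators $B T' B^{-1}$. The only genuinely new work is the nonlinear terms. For $v\cdot\nabla_t\omega$ I would split $v=v_{=}+v_{\neq}$; the zero mode $v_{=}$ satisfies $\p_x$-derivative structure that makes $v_{=}\cdot\nabla_t\omega$ vanish on interacting frequencies in the standard way (or is handled by the $y\p_x$ already being incorporated into the moving coordinates), and for $v_{\neq}$ one uses the product estimate in $H^N$ ($N\ge 5$ gives the algebra/Moser inequality), the Biot--Savart bound $\|v_{\neq}\|_{H^N}\lesssim \|\omega_{\neq}\|_{H^N}$ together with the time-decay gain from $A$, and absorbs the result into $\nu\|\nabla_t AB\,\omega\|_{L^2H^N}^2$ after paying an $\epsilon\nu^{-\cdots}$ factor; the condition $\epsilon<\nu^2$ is precisely what makes this factor small. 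The term $v\cdot\nabla_t\theta$ is treated analogously using the $\nu^{-1}$-weighted $\theta$ energy and its dissipation, and $T'(y)v_2\omega$ — which in the nonlinear system replaces the linear $T'(y)v_2$ — is a genuinely trilinear term handled by putting two factors in $L^2H^N$ via the dissipation functionals and the remaining one in $L^\infty$ via the bootstrap bound on $E$, again gaining a power of $\epsilon$.

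The main obstacle I anticipate is bookkeeping the powers of $\nu$ so that \emph{every} nonlinear contribution closes with a small constant under the single smallness assumption $\epsilon<\nu^2$: the $\nu^{-2/3}$ loss in the linear estimate, the $\nu^{-1/3}$ in the Biot--Savart/enhanced-dissipation trade-off, and the asymmetric $\nu^{-1}$ weighting between the $\omega$ and $\theta$ energies all have to be balanced so that the worst nonlinear term still has a net positive power of $\nu$ times $\epsilon$. A secondary delicate point is commuting the $x$-independent multiplier $BT'(y)B^{-1}$ and the Fourier truncations $1_{\Omega(t)}$ past the nonlinear products without losing regularity; here I would again invoke Schur's test with the weight $(1+|\xi-\zeta|)^{N+5}$ exactly as in Step~4 of Theorem~\ref{thm:bad}. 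Once the energy inequality $\dt E \le (\text{good terms})\le 0 + (\text{small}\cdot\epsilon)\,(\text{good functionals})$ is established, integrating in time and using $E(0)\le 2\epsilon^2$ gives $E(t)\le 4\epsilon^2$ together with the claimed $L^2H^N$ bounds on $\nu\,(\p_y-t\p_x)\omega$ and $v_{\neq}$, and unwinding the weights $A,B\in[c,1]$ with $c\gtrsim 1$ (since the integrals defining $A,B$ are uniformly bounded, as in Theorem~\ref{thm:good}) converts this into the stated bounds $\|\omega\|_{L^\infty H^N}\le 10\nu^{-1/3}\epsilon$ and $\|\p_x\theta\|_{L^\infty H^N}\le 10\epsilon$, improving the bootstrap constant and completing the argument.
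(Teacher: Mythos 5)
Your overall architecture (bootstrap on a ghost energy built from $A,B$, reuse of the linear absorption from Theorem \ref{thm:bad} for the $\p_x\theta$ and $T'(y)v_2$ couplings, smallness $\epsilon<\nu^2$ to close the nonlinear terms) matches the paper, but there is a genuine gap at the heart of the nonlinear estimate: you repeatedly invoke the good term $\nu\|\nabla_t AB\,\omega\|_{L^2H^N}^2$ and propose to absorb $v_{\neq}\cdot\nabla_t\omega_{\neq}$ into it. With only \emph{vertical} dissipation the equation produces $-\nu\|(\p_y-t\p_x)AB\,\omega\|_{H^N}^2$, not the full gradient, so the horizontal piece $v_{\neq}^1\p_x\omega_{\neq}$ of the transport term is not controlled by anything on your left-hand side. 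This is precisely the difficulty the paper singles out: it resolves it by exploiting divergence-freeness to produce the commutator $M(k,\xi)-M(k-l,\xi-\zeta)$ and then splitting frequency space into the region where $|k-l|\lesssim|\xi-\zeta-(k-l)t|$ (so $\p_x$ is dominated by the vertical derivative), the region where $|k-l|\lesssim|l|$ (so the derivative can be moved onto $\p_x v_{\neq}$), and the remaining resonant region \eqref{eq:20}, where one uses that $M$ depends only on $\xi/k$ and is Lipschitz in that variable. Without some such mechanism your estimate of the main nonlinearity does not close. The same issue appears, unaddressed, in your treatment of the zero-mode forcing $(v_{\neq}\cdot\nabla_t\omega_{\neq})_=$, and your claim that $v_=^1\p_x\omega_{\neq}$ ``vanishes on interacting frequencies'' is not correct either: it requires the antisymmetry/commutator identity plus the Lipschitz continuity of $M$ in $\xi$ to trade the commutator for a factor of $\zeta$ that converts $v_=$ into $\omega_=$.

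A second, smaller but still substantive error: you assert that $A,B\in[c,1]$ with $c\gtrsim 1$ because ``the integrals defining $A,B$ are uniformly bounded.'' The integral defining $B$ runs over the resonant window $|\tfrac{\xi}{k}-t|\leq\nu^{-1/3}$ and grows like $\log(\nu^{-1/3})$, so $B$ degenerates like a power of $\nu$ (the paper records $\nu^{1/3}\leq M\leq 1$). This degeneracy is exactly where the $\nu^{-1/3}$ in the conclusion and the hypothesis $\epsilon<\nu^2$ come from; treating $B$ as uniformly bounded below makes your power counting internally inconsistent (your bootstrap constant $100\,\nu^{-2/3}\epsilon^2$ then has no source). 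Finally, the zero modes $\omega_=,\theta_=$ must be carried as separate bootstrap quantities with plain heat-type energy estimates, since $A$ and $B$ are not defined at $k=0$ and there is no enhanced dissipation there.
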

\begin{rem}
  \begin{itemize}
    \item The nonlinear problem with vertical dissipation but without shear has been previously
studied in \cite{cao2013global,li2016global} and \cite{adhikari20102d}.
\item The threshold $\epsilon < \nu^2$ here is imposed to control losses of powers
  $\nu^{1/3}$ in enhanced dissipation estimates encoded in our Fourier
  multiplier $B$. 
\item The nonlinear problem without thermal dissipation has been recently
  studied in \cite{masmoudi2020stability}. In particular, they require Gevrey
  regularity to control resonances, which suggests that stability in Sobolev
  regularity may either fail or require non-trivial modification
  \cite{dengZ2019,dengmasmoudi2018}.
\item In a previous work \cite{zillinger2020enhanced} we studied the special
  case where $T(y)$ is affine (with positive slope) and with full dissipation. The present result allows for possibly oscillating
  profiles and only requires vertical dissipation.
\item We remark that we here estimate $\hl \theta$ instead of $\nabla_t \theta$
  or $\p_x \theta$. This is in view to the results of
  Section \ref{sec:linear}, for which we do not expect control of $\nabla_t \theta$.
\item In view of the partial dissipation results of Section
    \ref{sec:hydroimbalance} we here omit questions of enhanced dissipation.
  \item There has been extensive work on various partial dissipation regimes as
    well as on the inviscid problem. We discuss some of this literature
    in the introductory Section \ref{sec:introduction}.
  \end{itemize}
\end{rem}

\begin{proof} 
We follow a classical bootstrap argument approach \cite{Villani_long,bedrossian2016sobolev,liss2020sobolev} in the spirit of
Cauchy-Kowalewskaya.
As in \cite{zillinger2020enhanced} we here make use of multipliers constructed in
\cite{bedrossian2016sobolev} and \cite{liss2020sobolev} for the Navier-Stokes
and MHD problems, respectively, and adapt them to the problem at hand.
In contrast to these works we do not aim to derive (enhanced)
dissipation estimates.
However, we show that vertical dissipation is sufficient to employ these
bootstrap methods
(see also the discussion of echo chains \cite{masmoudi2020stability} in
Section \ref{sec:introduction}).
We remark that in Section \ref{sec:nonaffine} we have derived estimates for the
associated linearized problem, which we use as a basis for
our estimates in the following.
A main challenge in the control of various contributions here will be that we
can only control vertical dissipation and hence will have to separately consider
regimes where horizontal dissipation would be large.\\

In our bootstrap construction we consider $L^p H^N$ norms on a time interval
$(0,T)$, $T>0$, which incorporate a time-dependent Fourier multiplier $M$ with
\begin{align*}
  \nu^{1/3}\leq M \leq 1,
\end{align*}
to be specified later (see equation \eqref{eq:25}).
We then consider the maximal time $T>0$ such that the following \emph{bootstrap
estimates} are satisfied: 
  \begin{align}
    \label{eq:21}
    \begin{split}
    \begin{split}
     &\|M \omega_{\neq}\|_{L^\infty_t H^N}^2
    + \nu \|(\p_y-t\p_x) M \omega_{\neq}  \|_{L^2 H^N}^2 +
    \nu \|1_{|\xi-kt|\leq |k|} M \omega_{\neq}\|_{L^2 H^N}^2\\
   & \quad +  
    \|\nabla_t \Delta_t^{-1} M \omega_{\neq} \|_{L^2 H^N}^2 \leq 16 \epsilon^2, 
  \end{split}\\
    \begin{split}
      & \|\hl M \theta_{\neq}\|_{L^\infty H^N}^2 + \nu \|(\p_y-t\p_x) \hl M \theta_{\neq} \|_{L^2 H^N}^2 + \nu \|1_{|\xi-kt|\leq |k|} \hl M \theta_{\neq}\|_{L^2 H^N}^2\\
      & \quad + \|\nabla_t \Delta_t^{-1}\hl M \theta_{\neq} \|_{L^2 H^N}^2 \leq 16 \nu \epsilon ^2, 
    \end{split}\\
    \begin{split}
    \|\omega_{=}\|_{L^\infty_t H^N}^2 + \nu \|\p_y \omega_{=}\|_{L^2 H^N}^2\leq 16\epsilon^2 , 
  \end{split}\\
    \begin{split}
    \|\hl \theta_{=}\|_{L^\infty H^N}^2+ \nu \|\hl \p_y \omega_{=}\|_{L^2 H^N}^2 \leq 16 \nu \epsilon^2, 
    \end{split}
    \end{split}
  \end{align}
  where $\omega_{=}, \theta_{=}$ denote the $x$-averages and $\omega_{\neq},
  \theta_{\neq}$ their orthogonal complement and $\hl$ is the Fourier multiplier
  \begin{align*}
    \hl =\mathcal{F}^{-1}(k^2+(\min(\xi-kt)^2,\nu^{-2/3}))\mathcal{F}.
  \end{align*}

By local well-posedness and the assumed existence of a solution, there exists
some positive time $T>0$ such that \eqref{eq:21} holds with $L^2(\R_{+}; \cdot)$
and $L^\infty(\R_{+}; \cdot)$ replaced by $L^2((0,T);\cdot)$ and
$L^{\infty}((0,T); \cdot)$.
If the maximal time $T$ with this property is infinity, this yields the results
of the theorem in view of the bounds on $M$.

In the following we thus assume for the sake of contradiction that $T<\infty$ is maximal.
We will then show that at the time $T$ none of the estimates in
\eqref{eq:21} attain equality. Therefore, by continuity the estimates are still
satisfied for a slightly larger time, which contradicts the maximality and thus
implies the result.

In order to introduce ideas let us first consider the $x$-averages. We remark
that in the linearized results of Section \ref{sec:hydroimbalance} their
evolution decoupled and reduced to heat evolution. Thus, in the following we
have to control the effects of the nonlinearity, where the lack of full
dissipation requires us to introduce some additional splittings.\\

\underline{Estimating $\omega_{=}$:}
We observe that $\p_x \theta$, $v_{\neq}\cdot \nabla_t\omega_{=}$ and
$v_{=}\cdot \nabla_t\omega_{\neq}$ all posses a vanishing $x$-average and thus
obtain the following evolution equation for $\omega_{=}$:
\begin{align*}
  \dt \omega_{=} +0 + (v_{\neq} \cdot \nabla_t \omega_{\neq})_{=}= \nu \p_y^2 \omega_{=} + 0.
\end{align*}
Testing this equation with $\omega_{=}$ and integrating in time, we deduce that
\begin{align}
  \|\omega_{=}(T)\|_{H^N}^2 + 2\nu \int_{0}^T \|\p_y \omega_{=}\|_{H^N}^2 = \|\omega_{=}(0)\|_{H^N}^2 + \int_0^T \langle \omega_{=},v_{\neq} \cdot \nabla_t \omega_{\neq}  \rangle.
\end{align}
We recall that by assumption the initial data is of size much smaller than
$\sqrt{8}\epsilon$. Thus, if we can show that the integral on the
right-hand-side is bounded by $\epsilon$, this implies that equality in
\eqref{eq:21} is indeed not attained here.

As we assume only vertical dissipation, we first discuss the part involving $y$
derivatives of $\omega_{\neq}$:
\begin{align*}
  \int_0^T \langle \omega_{=},v_{\neq}^{2}  (\p_{y}-t\p_x) \omega_{\neq}  \rangle \\
  \leq \|\omega_{=}\|_{L^\infty H^N} \|v_{\neq}^2\|_{L^2 H^N} \|(\p_{y}-t\p_x) \omega_{\neq}\|_{L^2 H^N} \\
  \leq 4\epsilon  4\nu^{-1/3}\epsilon \nu^{-1/2} 4\nu^{-1/3} \epsilon = 4\nu^{-7/6}\epsilon 16 \epsilon^2,
\end{align*}
where $v_{\neq}^2$ denotes the vertical component of the velocity field, and the
loss of factors $\nu^{-1/3}$ is due to the multiplier $M$.
Since by assumption $4\nu^{-7/6}\epsilon$ is much smaller than $1$, this
term is too small to help achieve equality.

For the term involving $x$-derivatives, we introduce a Fourier multiplier $\chi$
which corresponds to the projection onto the set
\begin{align*}
  \{ (k,\xi): |\xi-kt|\geq |k| \}.
\end{align*}
Then by construction it holds that
\begin{align*}
  \|\chi \p_x \omega_{\neq}\|_{L^2 H^N} \leq \|\chi (\p_{y}-t\p_x) \omega_{\neq}\|_{L^2 H^N} \leq \|(\p_{y}-t\p_x) \omega_{\neq}\|_{L^2 H^N},
\end{align*}
which thus allows for an estimate of the same form as for the part involving $y$
derivatives.

Finally, we estimate
\begin{align*}
  \int_0^T \langle \omega_{=},v_{\neq}^{1}  \p_x (1-\chi) \omega_{\neq}  \rangle\\
  = -\int_0^T  \langle \omega_{=},(\p_x v_{\neq}^{1})  (1-\chi) \omega_{\neq}  \rangle \\
  \leq \|\omega_{=}\|_{L^\infty H^N} \|v_{\neq}^1\|_{L^2 H^N} \| (1-\chi) \omega_{\neq}\|_{L^2 H^N} \\
  \leq  4\epsilon \nu^{-7/6} 4\epsilon 4\epsilon,
\end{align*}
where we lose several powers of $\nu$ due the enhanced multiplier
$B$ discussed in Section \ref{sec:nonaffine}.
As this contribution is also much smaller than $16 \epsilon^2$, we conclude that
\begin{align*}
  \|\omega_{=}(T)\|_{H^N}^2 + \nu \int_{0}^T \|\p_y \omega_{=}\|_{H^N}^2 < 8 \epsilon^2 
\end{align*}
and thus equality in \eqref{eq:21} is not attained.
\\

\underline{Estimating $\theta_{=}$:}
Before discussing $\hl \theta_{=}$, we consider $\theta_{=}$, where we can argue
analogously to the case of $\omega_{=}$.
We may test the equation
\begin{align*}
  \dt \theta_{=} + (v_{\neq}\cdot \nabla_t \theta_{\neq})_{=} = \nu \p_y^2 \theta_{=}
\end{align*}
with $\theta_{=}$ and integrate in time to again derive an integral estimate.
We then estimate the contribution
\begin{align*}
  \int_0^T \langle \theta_{=}, v_{\neq}\cdot \nabla_t \theta_{\neq} \rangle
\end{align*}
by
\begin{align*}
  \|\theta_{=}\|_{L^\infty H^N} \|v_{\neq}^2\|_{L^2 H^N} \|(\p_y-t\p_x) \theta_{\neq}\|_{L^2 H^N}\\
  + \|\theta_{=}\|_{L^\infty H^N} \|v_{\neq}^1\|_{L^2 H^N} \|(\p_y-t\p_x) \chi \theta_{\neq}\|_{L^2 H^N}\\
  + \|\theta_{=}\|_{L^\infty H^N} \|\p_x v_{\neq}^1\|_{L^2 H^N} \| (1-\chi) \theta_{\neq}\|_{L^2 H^N}.
\end{align*}
By the bootstrap assumptions this sum can be controlled in terms of $\nu^{-7/6}
\epsilon^3$, which is much smaller than $\epsilon^2$.\\

\underline{Estimating $\hl \theta_{=}$:}
We may extend the definition of $\hl$ to purely $y$-dependent functions as the
Fourier multiplier $\hl = \mathcal{F}^{-1}\min(|\xi|, \nu^{-1/3}) \mathcal{F}$.
We note that the operator norm of $\hl$ is bounded by $\nu^{-1/3}$ and thus $\hl
\theta_{=}$ could be controlled in terms of $\theta_{=}$. However, in this way
we would pass from a bound by $\epsilon^2$ to one by $\nu^{-2/3}\epsilon^2$,
which is insufficient for our bootstrap approach.
Instead we aim to show that by a similar argument as above $\|\hl
\theta_{=}\|_{L^\infty H^N}$ can be controlled by $\epsilon^2$, where the loss
of powers of $\nu$ only factors into the smallness conditions on $\epsilon$ used
to control nonlinear interaction terms.

We may control
\begin{align*}
  & \quad \|\hl \theta_{=}(T)\|_{H^N}^2 +\nu \|\p_y \hl\theta_{=}\|_{L^2 H^N}^2 \\
  &= \|\hl \theta_{=}(0)\|_{H^N}^2 + \int_0^T \langle \hl \theta_{=}, \hl v_{\neq}\cdot \nabla_t \theta_{\neq} \rangle \\
  &\leq \|\hl \theta_{=}(0)\|_{H^N}^2 + \|\hl \theta_{=}\|_{L^\infty H^N} \nu^{-1/3} (\|v_{\neq}^2\|_{L^2 H^N} \|(\p_y-t\p_x) \theta_{\neq}\|_{L^2 H^N} \\
  &\quad +\|v_{\neq}^1\|_{L^2 H^N} \|(\p_y-t\p_x) \chi \theta_{\neq}\|_{L^2 H^N} + \|\p_x v_{\neq}^1\|_{L^2 H^N} \| (1-\chi) \theta_{\neq}\|_{L^2 H^N}).
\end{align*}
Thus, by assumption on $\epsilon$ and the initial data, equality in
\eqref{eq:21} is also not achieved for $\hl \theta_{=}$. \\

\underline{Estimating $\omega_{\neq}$ and $\theta_{\neq}$:}
Having discussed the control of the $x$-averages, we now turn to control
$\omega_{\neq}, \hl \theta_{\neq}$. Here we will first focus on contributions
due to $T(y)$ and the $x$-averages and finally discuss the control of the
nonlinearity involving $v_{\neq}$.

We recall that $\omega_{\neq}$ and $\theta_{\neq}$ satisfy the system
\begin{align*}
  \dt \omega_{\neq} &= \nu (\p_y-t\p_x)^2 \omega_{\neq} + \p_x \theta_{\neq} - v_{=}^1\p_x \omega_{\neq} - v_{\neq}^2 \p_y \omega_{=} - (v_{\neq}\cdot \nabla_t \omega_{\neq})_{\neq}, \\
   \dt \theta_{\neq} &= \nu (\p_y-t\p_x)^2 \omega_{\neq} + T'(y) v_{\neq}^2  - v_{=}^1 \p_x \theta_{\neq} - v_{\neq}^2 \p_y \theta_{=}- (v_{\neq}\cdot \nabla_t \theta_{\neq})_{\neq},
\end{align*}
where we consider $\omega_{=}$ and $\theta_{=}$ as given functions.

In the linearized problem of Section \ref{sec:nonaffine} we could without loss
of generality assume that $\omega_{=}=\theta_{=}=0$ and constructed a non-increasing
energy functional.
In the following we build on these estimates and integrate them in time to show that the control \eqref{eq:21} is stable under small nonlinear perturbations.

We recall the multipliers $A,B$ defined in \eqref{eq:22} in Section \ref{sec:nonaffine}:
\begin{align*}
  A(t,k,\xi)&= \exp(c\arctan(\frac{\xi}{k}-t)), \\
  B(t,k, \xi)&= \exp\left(-\int_0^t \frac{1}{\sqrt{1+(\frac{\xi}{k}-\tau)^2} 1_{|\frac{\xi}{k}-\tau|\leq \nu^{-1/3}}d\tau} \right),
\end{align*}
and for simplicity of notation write
\begin{align}
  \label{eq:25}
  M:=AB.
\end{align}

Let us first study the time-derivative of
\begin{align*}
  \|M \omega_{\neq}\|_{H^N}^2.
\end{align*}

Then it hold that
\begin{align}
  \label{eq:23}
  \begin{split}
  \|M \omega_{\neq}(T)\|_{H^N}^2 - \int_0^T \langle M \omega_{\neq}, \dot{M} \omega_{\neq} \rangle + \nu \int_0^T \|M (\p_y-t\p_x)\omega_{\neq}\|_{H^N}^2 \\
  = \|M \omega_{\neq}(0)\|_{H^N}^2
  + \int_0^T\langle M \omega_{\neq}, M( v_{=}^1\p_x \omega_{\neq}) \rangle\\
  + \int_0^T\langle M \omega_{\neq}, M (v_{\neq}^2 \p_y \omega_{=}) \rangle\\
  + \int_0^T \langle M \omega_{\neq}, M \p_x \theta_{\neq}\rangle\\
  + \int_0^T \langle M \omega_{\neq}, M (v_{\neq}\cdot \nabla_t \omega_{\neq}) \rangle\\
  =:\|M \omega_{\neq}(0)\|_{H^N}^2 + \mathcal{T}_{v_{=}^1} +   \mathcal{T}_{\omega_{=}} +  \mathcal{T}_{\omega_{\neq}, \theta_{\neq}} + \mathcal{T}_{v_{\neq}}.
  \end{split}
\end{align}
By assumption $\|M \omega_{\neq}(0)\|_{H^N}^2$ is much smaller than
$\epsilon^2$, so if we can show that the various terms $\mathcal{T}$ on the
right-hands-side can be controlled by the left-hand-side and higher powers of
$\epsilon$, we can show that the left-hand-side remains smaller than $4
\epsilon$ for all times.

\underline{Estimating $\mathcal{T}_{v^1_{=}}$:}
In order to estimate $\mathcal{T}_{v^1_{=}}$ we make use of cancellation in an
integration parts, following a similar argument as in \cite{zillinger2020enhanced} with additional adjustments
to account for partial dissipation.
More precisely, given the multiplier $M$, we note that by Parseval's identity
 \begin{align*}
   &\quad \langle M \omega_{\neq}, M (v_{=}^1 \p_x \omega_{\neq}) \rangle\\
   &= \langle M \omega_{\neq}, M (v_{=}^1 \p_x \omega_{\neq}) - v_{=}^1 \p_x M \omega_{\neq}\rangle\\
   &= \sum \int \int M(t,k,\xi) \omega_{\neq}(k,\xi)\omega_{\neq}(k,\xi+\zeta) (M(t,k,\xi)- M(t,k,\xi+\zeta)) v_{=}(\zeta). 
 \end{align*}
This cancellation is required to control $v_{=}(\zeta)=
\frac{1}{i\zeta}\omega_{=}(\zeta)$ in terms of $\omega_{=}$.
In particular, if $|\zeta|\geq 1$ this control is trivial, while for
$|\zeta|\leq 1$ we observe that $M(t,k,z)$ is Lipschitz with respect to $z$
uniformly in $t$ and $k\in \Z\setminus\{0\}$:
\begin{align*}
  |M(t,k,\xi)- M(t,k,\xi+\zeta)| \leq C |\zeta|.
\end{align*}
Hence, we can control $\mathcal{T}_{v^1_{=}}$ by
\begin{align*}
  \|M \omega_{\neq}\|_{L^2 H^N} \|\omega_{\neq}\|_{L^2 H^N} \|\omega_{=}\|_{L^\infty H^N}.
\end{align*}
The last factor is controlled by the preceding argument.
For the first two factors, we make the observation that
\begin{align*}
  \nu^{1/3} \leq \nu (k^2+ (\xi-kt)^2) + \frac{1}{\sqrt{1+(\frac{\xi}{k}-t)^2}} 1_{|\frac{\xi}{k}-t|\leq \nu^{-1/3}} 
\end{align*}
and hence $\|M \omega_{\neq}\|_{H^N}^2$ (and $\nu^{2/3}\|\omega_{\neq}\|_{H^N}^2$) can be estimated in terms of the dissipation and the
decay due to $\dot{M}$, at a loss of a factor $\nu^{1/3}$.\\

\underline{Estimating $\mathcal{T}_{\omega_{=}}$:}
We next discuss
\begin{align*}
 \mathcal{T}_{\omega_{=}}= \langle M \omega_{\neq}, M (v_{\neq}^2 \p_y \omega_{=}) \rangle.
\end{align*}
Here we may easily estimate by
\begin{align*}
  \nu^{-2/3} \|M \omega_{\neq}\|_{L^\infty H^N} \|M v_{\neq}^2 \|_{L^2 H^N} \|\p_y \omega_{=}\|_{L^2 H^N},
\end{align*}
where the factor of $\nu^{-2/3}$ corresponds to a rough bound of the operator
norm of $M$.
All factors are controlled in terms of the bootstrap assumption and thus
$\mathcal{T}_{\omega_{=}}$ is much smaller than $\epsilon^2$ provided
$\epsilon^3$ is much smaller than $\epsilon^2$ in terms of powers of $\nu$.

\underline{Estimating $\mathcal{T}_{\omega_{\neq}, \theta_{\neq}}$:}
As one of the main results of Section \ref{sec:nonaffine} we have shown that
$M=AB$ is constructed in just such a way that
\begin{align*}
   |\langle AB \omega_{\neq}, AB \p_x \theta_{\neq}\rangle| \leq - \langle M \omega_{\neq}, \dot{M} \omega_{\neq} \rangle -  \alpha^{-1} \langle M \hl \theta_{\neq}, \dot{M} \hl \theta_{\neq} \rangle 
\end{align*}
with $\alpha=\max(\|T'\|, \nu^{1/3})$ (see Theorem \ref{thm:bad} for the precise
definition).
Hence, we can absorb this contribution into the left-hand-side of \eqref{eq:23},
provided we can control $\dot{M} \hl \theta_{\neq}$, which will be the
left-hand-side of a later equation \eqref{eq:24}.

\underline{Estimating $\mathcal{T}_{v_{\neq}, \theta_{\neq}}$:}
It remains to discuss the main nonlinearity, where a key challenge is given by
the lack of horizontal dissipation.

If we had full dissipation at our disposal, this estimate would reduce to
controlling by
\begin{align*}
  \|\omega_{\neq}\|_{L^\infty H^N}\|v_{\neq}\|_{L^2 H^N} \|\nabla_t \omega_{\neq}\|_{L^2 H^N}.
\end{align*}
However, as we only require vertical dissipation the last factor is not easily
controlled anymore. We thus have to invest additional effort to control this
contribution.

As $v_{\neq}$ is divergence-free, we observe that
\begin{align*}
  \langle M \omega_{\neq}, M(v_{\neq} \cdot \nabla_t \omega_{\neq}) \rangle
  &= \langle M \omega_{\neq}, M(v_{\neq} \cdot \nabla_t \omega_{\neq}) - v_{\neq} \cdot \nabla_t M \omega_{\neq} \rangle\\
  &= \sum \iiint M(k,\xi) \tilde{\omega}_{\neq}(k,\xi) (M(k,\xi)- M(k-l,\xi-\zeta)) \tilde{v}_{\neq}(l,\zeta) \\
  & \quad \cdot
  \begin{pmatrix}
    k-l \\
    \xi+\zeta - (k-l)t
  \end{pmatrix}
  \tilde{\omega}_{\neq}(k-l,\xi-\zeta).
\end{align*}
We observe that if
\begin{align*}
  |k-l|\leq \nu^{-1} |\xi+\zeta - (k-l)t|
\end{align*}
the last gradient can simply be controlled by the vertical dissipation, which
yields an estimate in terms of
\begin{align*}
  \|\omega_{\neq}\|_{L^\infty H^N}\|v_{\neq}\|_{L^2 H^N} \|(\p_y-t\p_x) \omega_{\neq}\|_{L^2 H^N}
\end{align*}
and can hence be controlled.
Similarly, if
\begin{align*}
  |k-l|\leq \nu^{-1} |l|
\end{align*}
we can control in terms of
\begin{align*}
   \|\omega_{\neq}\|_{L^\infty H^N}\|\p_x v_{\neq}\|_{L^2 H^N} \|\omega_{\neq}\|_{L^2 H^N}.
\end{align*}
It thus only remains to discuss the region where
\begin{align}
  \label{eq:20}
  \begin{split}
  |t- \frac{\xi+\zeta}{k-l}|&\leq \nu, \\
  |l|&\leq \nu |k|.
  \end{split}
\end{align}
Here, we make use of cancellations in $M$.
More precisely, we note that $M(k,\xi)$ does not depend on $k$ and $\xi$
individually, but only on $\frac{\xi}{k}$ and that uniformly in time
\begin{align*}
  |M(k,\xi)- M(k-l,\xi-\zeta)| \leq C |\frac{\xi}{k}- \frac{\xi-\zeta}{k-l}|\\
  = C |\frac{\xi-kt}{k}- \frac{\xi-\zeta- (k-l)t}{k-l}|\\
  \leq C \frac{1}{1+\nu} \frac{1}{|k-l|}(|\xi-kt| + |\xi-\zeta-(k-l)t|),
\end{align*}
where we used \eqref{eq:20}.
We thus can control $\mathcal{T}_{v_{\neq}, \theta_{\neq}}$ in that region by
\begin{align*}
  \|\omega_{\neq}\|_{L^\infty H^N}\|v_{\neq}\|_{L^2 H^N}\|(\p_y-t\p_x) \omega_{\neq}\|_{L^2 H^N},
\end{align*}
which concludes the argument.

\underline{Controlling $\hl \theta_{\neq}$}
We next turn to controlling $\hl \theta_{\neq}$, where we study the time
derivative of
\begin{align*}
  \|M \hl \theta_{\neq}\|_{H^N}^2.
\end{align*}
Integrating in time, we have to control
\begin{align}
  \label{eq:24}
  \begin{split}
 \|M \hl \theta_{\neq}(T)\|_{H^N}^2 - \int_0^T \langle M \hl \theta_{\neq}, \dot{M} \hl \theta_{\neq} \rangle + \nu \int_0^T \|M (\p_y-t\p_x)\hl \theta_{\neq}\|_{H^N}^2 \\
  = \|M \hl \theta_{\neq}(0)\|_{H^N}^2 + \int_0^T \langle M \hl \theta_{\neq}, M \hl T'(y) v_{\neq}^2\rangle
  + \int_0^T \langle M \hl \theta_{\neq}, M \hl v_{=}^1 \p_x \theta_{\neq}  \rangle \\
  + \int_0^T \langle M \hl \theta_{\neq}, M \hl v_{\neq}^2 \p_y \theta_{=} \rangle \\
  + \int_0^T \langle M \hl \theta_{\neq},  M \hl v_{\neq}\cdot \nabla_t \theta_{\neq}\rangle\\
  =: \|M \hl \theta_{\neq}(0)\|_{H^N}^2 + \mathcal{T}_{T}+ \mathcal{T}_{v_{=}^1} + \mathcal{T}_{\theta_{=}} + \mathcal{T}_{v_{\neq}, \hl \theta_{\neq}}.
  \end{split}
\end{align}
Here the aim again is to to show that that all $\mathcal{T}$ contributions add
up to something smaller than $\epsilon^2$ and hence equality is not attained.

\underline{Estimating $\mathcal{T}_{T}$}
As one of the main results of Section \ref{sec:nonaffine} we have shown that
$\mathcal{T}_{T}$ can be controlled in terms of the decay of the multipliers $M$
and the vertical dissipation of $\omega$ only. Thus this contribution can
estimated in terms of the left-hand-side of \eqref{eq:24} and \eqref{eq:23}.

\underline{Estimating $\mathcal{T}_{v_{=}^1}$:}
Here we may argue analogously as for $\omega_{\neq}$, expect that $M$ has been
replaced by $\hl M$. We thus obtain an estimate by
\begin{align*}
  \|M \hl \theta_{\neq}\|_{L^2 H^N} \|\theta_{\neq}\|_{L^2 H^N} \|\omega_{=}\|_{L^\infty H^N}.
\end{align*}

\underline{Estimating $\mathcal{T}_{\theta_{=}}$:}
Here we may argue again analogously as for $\omega_{\neq}$ and control by
\begin{align*}
  \|M \hl \theta_{\neq}\|_{L^\infty H^N}\|v_{\neq}^2\|_{L^2 H^N} \|\p_y \theta_{=}\|_{L^2 H^N}.
\end{align*}

\underline{Estimating $\mathcal{T}_{v_{\neq}, \hl \theta_{\neq}}$:}
We recall that in this theorem we assume vertical dissipation also for the temperature (in contrast to
Section \ref{sec:nonaffine} and the problem considered in
\cite{masmoudi2020stability}).
Therefore, in this estimate we argue largely analogously to to the estimate of
$\mathcal{T}_{v_{\neq}, \omega_{\neq}}$.
However, since $\hl M$ also depends on $k$, we need some additional control in
the region where the horizontal dissipation is not easily controlled.

More precisely, by the preceding arguments for $\mathcal{T}_{v_{\neq},
  \omega_{\neq}}$ it suffices to consider
\begin{align*}
  &\sum \iint (\hl \theta_{\neq})(k,\xi) \frac{1}{\hl(k-l,\xi-\zeta)} (\hl M (k,\xi)- \hl M (k-l,\xi-\zeta))\tilde{v}_{\neq}(l,\zeta) \\
  & \quad \cdot
  \begin{pmatrix}
    k-l\\
    \xi-\zeta +(k-l)t
  \end{pmatrix}
  (\hl \theta_{\neq})(k-l,\xi-\zeta)
\end{align*}
in the regions where $\xi-\zeta$ is very close to resonant and $l$ is much
smaller than $k$.

However, in that case we may split into differences in $M$ and in $\hl$ and
observe that
\begin{align*}
  \frac{\sqrt{k^2+ (\xi-kt)^2} - \sqrt{(k-l)^2+ (\xi+\zeta- (k-l)t)^2}}{\sqrt{(k-l)^2+ (\xi+\zeta- (k-l)t)^2}}\\
  \approx \frac{\sqrt{k^2}- \sqrt{(k-l)^2}}{\sqrt{(k-l)^2}} \approx \frac{l}{|k-l|},
\end{align*}
where we could neglect $\xi-kt$ and $\xi+\zeta- (k-l)t$ since these terms could
otherwise be controlled in terms of the  vertical dissipation.
Hence, over all we can control by
\begin{align*}
  \|\hl \theta_{\neq}\|_{L^\infty H^N}\|\p_x v_{\neq}\|_{L^2 H^N} \|\hl \theta_{\neq}\|_{L^2 H^N},
\end{align*}
which concludes the proof.

\end{proof}

\subsubsection*{Acknowledgments}
This research has been funded by the Deutsche Forschungsgemeinschaft (DFG, German Research Foundation) – Project-ID 258734477 – SFB 1173.

The author would like to thank Charlie Doering for pointing out the questions of
Rayleigh-B\'enard instability without thermal diffusion for
profiles other than hydrostatic balance.

\bibliographystyle{alpha}
\bibliography{citations2}

\end{document}